\documentclass{article}
\usepackage{amsmath}
\usepackage{amsfonts}
\usepackage{amssymb}
\usepackage{amsthm}
\usepackage{color}
\usepackage{enumerate}
\usepackage{geometry}
\usepackage{listings}
\usepackage{microtype}
\usepackage{rotating}
\usepackage{textcomp}
\usepackage{makeidx}
\usepackage{framed}
\usepackage{array}
\usepackage[hidelinks]{hyperref}
\usepackage[nameinlink,capitalise]{cleveref}
\usepackage{authblk}
\usepackage{enumitem}
\usepackage{comment}
\usepackage[small]{titlesec}

\newcommand{\N}{\mathbb{N}}

\newcommand{\R}{\mathbb{R}}


\renewcommand{\a}{\alpha}

\newcommand{\e}{\varepsilon}

\newcommand{\D}{\Delta}


\newcommand{\dx}{\,dx}
\newcommand{\dy}{\,dy}

\newcommand{\sint}{\int_{[0,1]^2}}

\renewcommand{\P}{\mathbb{P}}


\newcommand{\limn}{\lim_{n\rightarrow\infty}}

\newcommand{\W}{\mathcal{W}}
\newcommand{\Wt}{\widetilde{\mathcal{W}}}
\newcommand{\M}{\mathcal{M}}
\newcommand{\ds}{d_\square}
\newcommand{\des}{\delta_\square}
\newcommand{\h}{\widetilde{h}}
\newcommand{\g}{\widetilde{g}}

\newcommand{\Ptr}{\widetilde{\P}_{n,r_n}}
\newcommand{\Rs}{\mathcal{R}}
\newcommand{\f}{\widetilde{f}}
\newcommand{\ro}{\overline{r}}
\newcommand{\T}{\mathcal{T}}

\theoremstyle{plain} 
\newtheorem{theorem}{Theorem}[section]
\newtheorem*{theorem*}{Theorem}

\newtheorem*{proposition*}{Proposition}
\newtheorem{lemma}[theorem]{Lemma}
\newtheorem*{lemma*}{Lemma}

\theoremstyle{definition} 
\newtheorem{corollary}[theorem]{Corollary}
\newtheorem*{corollary*}{Corollary}

\newtheorem*{remark*}{Remark}

\newtheorem*{fact*}{Fact}

\newtheorem*{definition*}{Definition}

\newtheorem*{example*}{Example}

\newtheorem*{idea*}{Idea}

\newtheorem*{property*}{Property}

\numberwithin{equation}{section}

\frenchspacing
\lstset{keepspaces=true}
\usepackage[english]{babel}

\begin{document}

\title{The large deviation principle for inhomogeneous Erd\H{o}s-R\'enyi random graphs}
\date{\vspace{-5ex}}

\author[1]{Maarten Markering}
\affil[1]{Mathematical Institute, Leiden University} 
\maketitle  
\let\thefootnote\relax\footnotetext{Date: \today}
\let\thefootnote\relax\footnotetext{Email: maartenmarkering@outlook.com}
\let\thefootnote\relax\footnotetext{2010 Mathematics Subject Classification: 05C80, 60F10, 60B20}
\let\thefootnote\relax\footnotetext{Key words: inhomogeneous Erd\H{o}s-R\'enyi random graph, large deviations, rate function, graphons, largest eigenvalue}
\let\thefootnote\relax\footnotetext{Data sharing not applicable to this article as no datasets were generated or analysed during the current study.}

\begin{abstract}
Consider the \textit{inhomogeneous Erd\H{o}s-R\'enyi random graph} (ERRG) on $n$ vertices for which each pair $i,j\in\{1,\ldots,n\}$, $i\neq j$ is connected independently by an edge with probability $r_n(\frac{i-1}{n},\frac{j-1}{n})$, where $(r_n)_{n\in\N}$ is a sequence of graphons converging to a \textit{reference graphon} $r$. As a generalization of the celebrated large deviation principle (LDP) for ERRGs by Chatterjee and Varadhan \cite{chatterjee2010large}, Dhara and Sen \cite{dhara2019large} proved an LDP for a sequence of such graphs under the assumption that $r$ is bounded away from 0 and 1, and with a rate function in the form of a \textit{lower semi-continuous envelope}. We further extend the results by Dhara and Sen. We relax the conditions on the reference graphon to $\log r,\log(1-r)\in L^1([0,1]^2)$. We also show that, under this condition, their rate function equals a different, more tractable rate function. We then apply these results to the large deviation principle for the \textit{largest eigenvalue} of inhomogeneous ERRGs and weaken the conditions for part of the analysis of the rate function by Chakrabarty, Hazra, Den Hollander and Sfragara \cite{chakrabarty2020large}.
\end{abstract}

\section{Introduction}
\subsection{Motivation and outline}
The large deviation principle (LDP) for the Erd\H{o}s-R\'enyi random graph (ERRG) was introduced and proved by Chatterjee and Varadhan in their seminal paper \cite{chatterjee2010large}. They viewed the sequence of ERRGs as \textit{graphons} and obtained the LDP in the space of graphons with the cut topology. With this LDP, Chatterjee and Varadhan completely solved a long-standing open problem regarding upper tails for large deviations of triangle counts in ERRGs. This spurred many further developments in the area of large deviations for random graphs. We refer to \cite{Chatterjeebook} for an overview.

In this paper, we study the \textit{inhomogeneous Erd\H{o}s-R\'enyi random graph}, which is a generalization of the ERRG in that different edges do not necessarily occur with the same probability. This probability is controlled by a \textit{reference graphon}. Recently, Dhara and Sen \cite[Proposition 3.1]{dhara2019large} proved the LDP for the inhomogeneous ERRG model under the assumption that the reference graphon $r$ is bounded away from 0 and 1, i.e., there exists $\eta>0$ such that $\eta\leq r(x,y)\leq 1-\eta$ for all $(x,y)\in[0,1]^2$. The rate function $J'_r$ for their LDP has the form of a \textit{lower semi-continuous envelope} of another rate function $I_r$, which complicates its analysis.

We extend their proof to reference graphons that satisfy the mild integrability condition $\log r,\log(1-r)\in L^1([0,1]^2)$. Furthemore, we show that $J'_r$ is also the lower semi-continuous envelope of another, more tractable rate function $J_r$ that is already semi-continuous, which implies that $J'_r$ actually equals $J_r$. The relaxation of the conditions broadens the scope of applications, and the simplification of the rate function makes the LDP more suitable to such applications. As an example application, we consider the LDP for the \textit{largest eigenvalue} of an inhomogeneous ERRG. We show that the conditions for the analysis by Chakrabarty, Hazra, Den Hollander and Sfragara \cite{chakrabarty2020large} can partially be weakened.

Random graphs with \textit{inhomogeneities} and \textit{constraints} have many applications in complex networks, physics and statistics. As a consequence, recent interest in large deviations for inhomogeneous random graphs has grown considerably. The LDP for ERRGs was applied by Chatterjee and Diaconis \cite{chatterjee2013} to the exponential random graph, Dhara and Sen \cite{dhara2019large} applied the LDP for inhomogeneous ERRGs to random graphs with fixed degrees, and recently, Borgs et al. \cite{borgs2020large} proved an LDP for block models. This paper is part of a general effort to better understand large deviations for inhomogeneous random graphs.

\paragraph{Outline.} In Section \ref{sec:graphons}, we briefly introduce the necessary concepts and definitions from graph limit theory. The LDP for inhomogeneous ERRGs is stated in Section \ref{sec:maintheorem}, and the rate function is introduced. In Section \ref{section:largesteigenvalueintro}, we introduce large deviations for the largest eigenvalue of the inhomogeneous ERRG. The proof that the rate function is well-defined, lower semi-continuous, and equal to the rate function of Dhara and Sen \cite{dhara2019large} is given in Section \ref{section:ratefunction}. We generalize Dhara and Sen's proof of the LDP upper bound in Section \ref{section:upperbound}. In Section \ref{section:largesteigenvalue}, we finish by showing that the results from this paper can be used to weaken the conditions of the analysis of the rate function for the largest eigenvalue by Chakrabarty et al. \cite{chakrabarty2020large}.

\subsection{Graphons}\label{sec:graphons}
A \textit{graphon} is a Borel measurable function $h\colon[0,1]^2\to[0,1]$ such that $h(x,y)=h(y,x)$ for all $(x,y)\in[0,1]^2$. We denote the set of graphons by $\W$. Every finite simple graph $G=(V(G),E(G))$ with $V(G)=[n]:=\{1,\ldots,n\}$ can be represented as the graphon $h^G$ defined as
\begin{equation}
    h^G(x,y)=\begin{cases}
    1, & (i,j)\in E(G),\,(x,y)\in B(i,j,n),\\
    0, &\text{otherwise,}
    \end{cases}
\end{equation}
with $B(i,j,n):=[\frac{i-1}{n},\frac{i}{n})\times[\frac{j-1}{n},\frac{j}{n})$. We call $h^G$ the \textit{empirical graphon} of $G$. Let $\M$ denote the set of Lebesgue measure-preserving bijections $\phi\colon[0,1]\to[0,1]$. The \textit{cut distance} on $\W$ is defined as 
\begin{equation}
    \ds(h_1,h_2):=\sup_{S,T\subset[0,1]}\left|\int_{[0,1]^2}(h_1(x,y)-h_2(x,y))\dx\dy\right|
\end{equation}
and the \textit{cut metric} is defined as
\begin{equation}
    \des(h_1,h_2):=\inf_{\phi\in\M}\ds(h^\phi_1,h_2),
\end{equation}
where $h^\phi(x,y):=h(\phi(x),\phi(y))$. See \cite[Theorem 8.13]{Lovaszbook} for several equivalent definitions for $\des$. The cut metric induces an equivalence relation $\sim$ on $\W$ where $h_1\sim h_2$ if $\des(h_1,h_2)=0$. Define $\Wt:=\W/{\sim}$ and denote the equivalence class of $h\in\W$ by $\h$. The space $(\Wt,\des)$ is a compact metric space \cite[Theorem 9.23]{Lovaszbook}.

\subsection{Main theorem}\label{sec:maintheorem}
For some $h\in\W$, define the random graph $G_n$ with vertex set $[n]$ by connecting every pair of vertices $i,j\in[n]$ with probability $h(\frac{i-1}{n},\frac{j-1}{n})$. Denote the law of the empirical graphon $h^{G_n}$ of $G_n$ on $\W$ by $\P_{n,h}$ and the law of $\widetilde{h}^{G_n}$ on $\Wt$ by $\widetilde{\P}_{n,h}$. 

We define a sequence of random graphs as follows. Fix a graphon $r\in\W$ called the \textit{reference graphon}
and let $(r_n)_{n\in\N}$ be a sequence of \textit{block graphons} of the form
\begin{equation}\label{eq:blockgraphon}
    r_n=\begin{cases}
    r_{n,ij}, &(x,y)\in B(i,j,n),\, 1\leq i,j\leq n,\, i\neq j,\\
    0, &\text{otherwise},
    \end{cases}
\end{equation}
such that $0<r<1$ and $r_n\rightarrow r$ Lebesgue-almost everywhere and in $L^1$-norm as $n\rightarrow\infty$. Further assume that 
\begin{equation}\label{eq:assreference}
    \log r,\log(1-r)\in L^1([0,1]^2)
\end{equation}
and 
\begin{equation}\label{eq:asssequence}
    \|\log r_n-\log r\|_{L^1},\|\log(1-r_n)-\log(1-r)\|_{L^1}\rightarrow0
\end{equation}
as $n\rightarrow\infty$.

We show that $(\Ptr)_{n\in\N}$ satisfies an LDP. First, we define a suitable rate function. For $a\in[0,1]$ and $b\in(0,1)$, let
\begin{equation}\label{eq:variationrate}
    \begin{split}
        \Rs(a\mid b)&:=a\log\frac{a}{b}+(1-a)\log\frac{1-a}{1-b},
    \end{split}
\end{equation}
where we use the convention $0\log0=0$. For $h,r\in\W$ such that $0<r<1$ Lebesgue-almost everywhere, this map can be extended to a map $I_r$ on $\W$ by defining
\begin{equation}
    I_{r}(h)=\int_{[0,1]^2}\Rs(h(x,y)\mid r(x,y))\dx\dy.
\end{equation}
In the case $r\equiv p$, $I_r$ is invariant under measure-preserving bijections. Hence, $I_r$ can be extended to $\Wt$ as $J_r(\h):=I_r(h)$ \cite[Proposition 5.1]{Chatterjeebook}. For inhomogeneous reference graphons, this is no longer the case. To solve this problem, Dhara and Sen extend the function to its \textit{lower semi-continuous envelope}, i.e.,
\begin{equation}\label{eq:ratedharasen}
    J_r'(\h):=\sup_{\eta>0}\inf_{g\in B(\h,\eta)}I_r(g),
\end{equation}
where $B(\h,\eta):=\{g\in\W\mid\des(\h,\g)\leq\eta\}$. This  extension is well-defined and lower semi-continuous \cite[Lemma 2.1]{dhara2019large}, but analytic manipulation can be somewhat difficult. Instead, we propose the following more tractable rate function:
\begin{equation}
    J_r(\h):=\inf_{\phi\in\M}I_r(h^\phi).
\end{equation}
A priori, it is not clear whether $J_r$ a good rate function on $\Wt$ or even well-defined. In Section \ref{section:ratefunction}, we show that it is, and that $J_r$ in fact equals $J'_r$ under the condition \eqref{eq:assreference}. This is one of the main results of this paper.

We are now ready to state the main theorem.
\begin{theorem}\label{th:maintheorem}
Subject to \eqref{eq:assreference} and \eqref{eq:asssequence}, the sequence $(\Ptr)_{n\in\N}$ satisfies the large deviation principle on $(\Wt,\des)$ with rate $\frac{n^2}{2}$ and rate function $J_r$, i.e., for all closed sets $\widetilde{F}\subset\Wt$ and open sets $\widetilde{U}\subset\Wt$,
\begin{equation}
    \begin{split}
        \limsup_{n\rightarrow\infty}\frac{2}{n^2}\log\Ptr(\widetilde{F})\leq-\inf_{\h\in\widetilde{F}}J_r(\h),\\
        \liminf_{n\rightarrow\infty}\frac{2}{n^2}\log\Ptr(\widetilde{U})\geq-\inf_{\h\in\widetilde{U}}J_r(\h).
    \end{split}
\end{equation}
\end{theorem}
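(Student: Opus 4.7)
The plan is to reduce Theorem \ref{th:maintheorem} to a modest extension of Dhara and Sen's LDP \cite[Proposition 3.1]{dhara2019large}. Once the identity $J_r=J_r'$ has been established in Section \ref{section:ratefunction} under the integrability hypothesis \eqref{eq:assreference}, it suffices to prove the LDP on $(\Wt,\des)$ with rate function $J_r'$ under \eqref{eq:assreference} and \eqref{eq:asssequence}. The strategy is not to rewrite the Dhara--Sen proof from scratch but to identify the few places where they use the uniform bound $\eta\leq r\leq 1-\eta$ and replace that input with estimates based on the weaker integrability assumption, as is done in Section \ref{section:upperbound}.

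For the lower bound $\liminf_{n\to\infty}\frac{2}{n^2}\log\Ptr(\widetilde{U})\geq-\inf_{\h\in\widetilde{U}}J_r(\h)$, I would use a standard tilting argument. For any $\h_0\in\widetilde{U}$ and any representative $h_0$, let $Q_n$ be the law of the inhomogeneous ERRG generated by the block discretization of $h_0$; a concentration argument in the cut norm shows that the $Q_n$-mass of $\widetilde{U}$ tends to $1$, and the Radon--Nikodym derivative $\mathrm{d}\P_{n,r_n}/\mathrm{d}Q_n$ factorizes edge by edge into a product whose mean logarithm equals $-\tfrac{n^2}{2}I_r(h_0)(1+o(1))$, with finiteness guaranteed by \eqref{eq:assreference} and \eqref{eq:asssequence}. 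Infimizing over representatives $h_0$ and then over $\h_0\in\widetilde{U}$ yields $-\inf_{\widetilde{U}}J_r$.

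The main technical work is the upper bound. By compactness of $(\Wt,\des)$, it suffices to prove a local upper bound of the form $\limsup_{n\to\infty}\frac{2}{n^2}\log\Ptr(B(\h_0,\delta))\leq -J_r'(\h_0)+o_\delta(1)$ and to combine such bounds via a finite cover of $\widetilde{F}$. To extend Dhara and Sen's local bound to the log-integrable setting, I would introduce truncated reference graphons $r^{(\eta)}:=(\eta\vee r)\wedge(1-\eta)$ and their block discretizations $r_n^{(\eta)}$, apply the existing LDP to the truncated model, and compare the truncated and untruncated laws through their Radon--Nikodym derivative. Along every sample path, the logarithm of this derivative is bounded deterministically by
\begin{equation*}
\sum_{i\neq j}\bigl(|\log r_n(i,j)-\log r_n^{(\eta)}(i,j)|+|\log(1-r_n(i,j))-\log(1-r_n^{(\eta)}(i,j))|\bigr),
\end{equation*}
which by \eqref{eq:asssequence} together with dominated convergence is $o(n^2)$ after $n\to\infty$ followed by $\eta\downarrow 0$. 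Letting $\eta\downarrow 0$ in $J_{r^{(\eta)}}'$ and using the same dominated-convergence input recovers $J_r'=J_r$ in the limit.

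The hardest step will be making the above comparison uniform over the event $\{h^{G_n}\in B(\h_0,\delta)\}$, since the pointwise discrepancy between the truncated and untruncated models is unbounded on the set $\{r<\eta\}\cup\{r>1-\eta\}$. What makes this manageable is precisely the integrability hypothesis \eqref{eq:assreference}: it guarantees that both the measure of the bad set and the $L^1$-contribution of the log-singularities can be made arbitrarily small, so the truncation error is at most $o(n^2)$ on the exponential scale. With this estimate in hand, the Dhara--Sen machinery produces the required local upper bound, and combining the local bounds over a finite cover of $\widetilde{F}$ completes the proof.
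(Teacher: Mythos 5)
Your proposal takes a genuinely different route from the paper. The paper re-runs Dhara and Sen's construction: it replaces $r$ by its level-$k$ block average $\ro_k$, proves a Radon--Nikodym comparison between $\P_{n,r_n}$ and $\P_{n,\ro_k}$ (Lemma \ref{lemma:blockapprox}) that needs a careful accounting of the mismatch between the $n$-grid and the $k$-grid (the sets $C_{i,j,k}$ and $A_n$), and then repeats the Szemer\'edi/relabelling/weak-topology argument of \cite{dhara2019large} for the fixed block graphon $\ro_k$ before sending $k\to\infty$ via Lemma \ref{lemma:ratecontinuousreference}. You instead truncate the reference graphon, apply Dhara and Sen's Proposition 3.1 as a black box to the truncated pair $(r^{(\eta)},r_n^{(\eta)})$, compare $\P_{n,r_n}$ with $\P_{n,r_n^{(\eta)}}$ by a Radon--Nikodym estimate, and send $\eta\downarrow 0$. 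Since $r_n$ and $r_n^{(\eta)}$ live on the same $n$-grid there is no boundary bookkeeping; the comparison reduces to $\|\log r_n-\log r_n^{(\eta)}\|_{L^1}+\|\log(1-r_n)-\log(1-r_n^{(\eta)})\|_{L^1}$ being small uniformly in $n$, which follows from the uniform integrability of $(\log r_n)_n$ and $(\log(1-r_n))_n$ implied by \eqref{eq:asssequence}, and passing from $J'_{r^{(\eta)}}$ to $J_r$ uses the same uniform bound $|I_{r^{(\eta)}}(g)-I_r(g)|\leq\|\log r^{(\eta)}-\log r\|_{L^1}+\|\log(1-r^{(\eta)})-\log(1-r)\|_{L^1}$ as Lemma \ref{lemma:ratecontinuousreference}. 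Both routes rest on the same log-integrability input, but yours outsources the combinatorial core to the cited result and is arguably cleaner; the paper's is self-contained and parallel in structure to \cite{dhara2019large}. The lower-bound tilting argument you sketch is the standard one, which the paper also uses and delegates to \cite{Markeringthesis}.

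Two clarifications are needed. First, the concern in your last paragraph about making the comparison ``uniform over the event'' is a red herring: the bound you display on $|\log(\mathrm{d}\P_{n,r_n}/\mathrm{d}\P_{n,r_n^{(\eta)}})|$ is deterministic, so it is automatically uniform over all events; the only genuine work is the double limit $\lim_{\eta\downarrow 0}\limsup_{n\to\infty}$ of that bound, which is precisely the uniform-integrability point above. This also means the truncation comparison holds for arbitrary measurable sets, so you do not even need the local-bound-plus-finite-cover reduction. Second, you should define $r_n^{(\eta)}:=(\eta\vee r_n)\wedge(1-\eta)$, i.e.\ truncate the given block sequence rather than re-discretize $r^{(\eta)}$: this makes $r_n^{(\eta)}$ a block graphon on the $n$-grid bounded in $[\eta,1-\eta]$, and, since truncation is $1$-Lipschitz, $r_n^{(\eta)}\to r^{(\eta)}$ a.e.\ and in $L^1$, which is exactly what Dhara and Sen's proposition requires.
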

This theorem was proved by Dhara and Sen \cite[Proposition 3.1]{dhara2019large} under the condition that there exists an $\eta>0$ such that $\eta\leq r(x,y)\leq 1-\eta$ and $\eta\leq r_{n,ij}(x,y)\leq1-\eta$ for all $(x,y)\in[0,1]^2$, $n\in\N$ and $i\neq j$. The novelty in this paper lies in weakening the conditions and showing that $J_r=J'_r$.

The proof of the lower bound requires only minor adjustments of the proof in \cite{Chatterjeebook}. Therefore, we only prove the upper bound in this paper. For a detailed proof of the lower bound, we refer to \cite[Section 6]{Markeringthesis}. Throughout the rest of the paper, we implicitly assume that \eqref{eq:assreference} and \eqref{eq:asssequence} hold and no longer mention it in the statement of our results.

\subsection{Large deviations for the largest eigenvalue}\label{section:largesteigenvalueintro}
Let $\lambda_n$ be the largest eigenvalue of the adjacency matrix of $G_n$. Then $\frac{\lambda_n}{n}$ also satisfies an LDP. Chakrabarty et al. \cite{chakrabarty2020large} studied the rate function $\psi_r$ under the conditions that $r$ is bounded away from 0 and 1 and of rank 1, i.e. $r(x,y)=\nu(x)\nu(y)$ for some $\nu\colon[0,1]\to[0,1]$. They analysed the scaling of the rate function and identified the form of the minimisers near the rate function's minimum and near the boundaries 0 and 1.

The requirement that $r$ is bounded away from 0 and 1 stems in part from the fact that the results from \cite{chakrabarty2020large} are obtained using the LDP by Dhara and Sen \cite{dhara2019large}. They posed the question whether the boundedness condition could be weakened to some form of integrability condition. In this paper, we show that the condition can partially be relaxed to \eqref{eq:assreference}. In particular, we extend their analysis of the rate function near its minimum.

From the LDP for inhomogeneous ERRGs, Chakrabarty et al. \cite{chakrabarty2020large} derive the following LDP for the largest eigenvalue. Note that for this theorem, we do not yet require $r$ to be of rank 1.
\begin{theorem}
Let $\P^*_n$ denote the law of $\lambda_n/n$. Subject to \eqref{eq:assreference} and \eqref{eq:asssequence}, the sequence $(\P^*_n)_n\in\N$ satisfies the LDP on $\R$ with rate $n^2/2$ and with rate function
\begin{equation}\label{eq:rateeigenvalue}
    \begin{split}
        \psi_r(\beta)=\inf_{\substack{\h\in\Wt\\\|T_h\|=\beta}}J_r(\h)=\inf_{\substack{h\in\W\\\|T_h\|=\beta}}I_r(h),\quad\beta\in\R,
    \end{split}
\end{equation}
with $T_h$ the operator on $L^2([0,1]^2)$ defined as
\begin{equation}
    \begin{split}
        T_h(u)(x)=\int_{[0,1]}h(x,y)u(y)\dy
    \end{split}
\end{equation}
for $h\in\W$, $u\in L^2([0,1]^2)$ and $x\in[0,1]$, and where $\|T_h\|$ is the operator norm of $T_h$ with respect to the $L^2$-norm on $L^2([0,1]^2)$.
\end{theorem}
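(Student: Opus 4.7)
The plan is to derive this LDP from Theorem~\ref{th:maintheorem} by applying the contraction principle to the functional sending an empirical graphon to its operator norm.

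First, I would identify $\lambda_n/n$ with $\|T_{h^{G_n}}\|$. Writing $A$ for the adjacency matrix of $G_n$, any function $u \in L^2([0,1])$ that is piecewise constant on the blocks $[\tfrac{i-1}{n}, \tfrac{i}{n})$ with values $(u_i)_{i=1}^n$ satisfies $T_{h^{G_n}}(u)(x) = n^{-1}(Au)_i$ on the $i$-th block. Since $T_{h^{G_n}}$ maps $L^2([0,1])$ into these piecewise-constant functions, every nonzero eigenfunction takes this form, so the nonzero spectra agree up to a factor $n$; hence $\|T_{h^{G_n}}\| = \lambda_n/n$, and it follows that $\P_n^* = F_{\ast} \widetilde{\P}_{n, r_n}$, where $F \colon \widetilde{\W} \to \R$ is defined by $F(\widetilde{h}) := \|T_h\|$.

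Next, I would verify that $F$ is well-defined and continuous on $(\widetilde{\W}, \delta_\square)$. Well-definedness follows because measure-preserving bijections $\phi \in \M$ induce unitary operators $U_\phi$ on $L^2([0,1])$, and $T_{h^\phi} = U_\phi^{-1} T_h U_\phi$, so $\|T_{h^\phi}\| = \|T_h\|$. Continuity in the cut metric is a standard fact in graphon theory: convergence $\delta_\square(\widetilde{h}_n, \widetilde{h}) \to 0$ implies convergence of the top of the spectrum of the compact self-adjoint operators $T_{h_n}$ to that of $T_h$ (see \cite{Lovaszbook}). With continuity in hand, the contraction principle applied to Theorem~\ref{th:maintheorem} yields the LDP for $(\P_n^*)_{n \in \N}$ with rate $n^2/2$ and rate function
\begin{equation*}
    \psi_r(\beta) = \inf_{\widetilde{h} \in \widetilde{\W}:\, \|T_h\|=\beta} J_r(\widetilde{h}).
\end{equation*}

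For the second equality in \eqref{eq:rateeigenvalue}, I would substitute the identity $J_r(\widetilde{h}) = \inf_{\phi \in \M} I_r(h^\phi)$ from Section~\ref{sec:maintheorem} and again use $\|T_{h^\phi}\| = \|T_h\|$. The resulting double infimum over $\widetilde{h}$ with $\|T_h\|=\beta$ and $\phi \in \M$ collapses to a single infimum of $I_r(h)$ over all $h \in \W$ with $\|T_h\|=\beta$, since every such $h$ is a representative of its own equivalence class in $F^{-1}(\beta)$. The main obstacle is verifying the continuity of $F$: the cut topology is rather weak, so crude norm comparisons are insufficient, and one must invoke the spectral stability of graphons, using the fact that $T_h$ is compact and self-adjoint and that its top eigenvalues depend continuously on $\widetilde{h}$.
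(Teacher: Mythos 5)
Your proposal is correct and matches the paper's approach: the paper defers to \cite[Theorem 1.4]{chakrabarty2020large}, which is precisely this contraction-principle argument based on the identity $\lambda_n/n=\|T_{h^{G_n}}\|$ and the continuity of $\widetilde{h}\mapsto\|T_h\|$ on $(\Wt,\des)$. Your handling of the second equality via $\|T_{h^\phi}\|=\|T_h\|$ and the collapse of the double infimum is also the intended argument.
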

\begin{proof}
The proof is standard and follows from Theorem \ref{th:maintheorem}, combined with the observation that $\lambda_n/n=\|T_{h^{G^n}}\|$. See \cite[Theorem 1.4]{chakrabarty2020large}.
\end{proof}
Note that Chakrabarty et al. \cite{chakrabarty2020large} already use the rate function $J_r$, so their result is already an application of the results from this paper.

Put $C_r=\|T_r\|$. Then Chakrabarty et al. \cite{chakrabarty2020large} show that the rate function $\psi_r$ is continuous and unimodal on $[0,1]$, with a unique zero at $C_r$, and that it is strictly decreasing and strictly increasing on $[0,C_r]$ and $[C_r,1]$ respectively. Furthermore, for every $\beta\in[0,1]$, the set of minimisers of the variational formula for $\psi_r(\beta)$ in \eqref{eq:rateeigenvalue} is non-empty and compact in $\Wt$. For $\beta\not\in[0,1]$, $\psi_r(\beta)=+\infty$. These results do not require boundedness away from 0 and 1 of the reference graphon.

One of the main results by Chakrabarty et al. \cite{chakrabarty2020large} is the scaling of the rate function and the minimisers near $\beta=C_r$ under the condition that $r$ is bounded away from 0 and 1. We generalize it to the following theorem, which we prove in Section \ref{section:largesteigenvalue}.
\begin{theorem}\label{th:scalingeigenvalue}
Assume there exists $\nu\colon[0,1]\to[0,1]$ such that $r(x,y)=\nu(x)\nu(y)$. Then, subject to \eqref{eq:assreference},
\begin{equation}
    \psi_r(\beta)=(1+o(1))K_r(\beta-C_r)^2,\quad\beta\rightarrow C_r,
\end{equation}
with
\begin{equation}
    K_r=\frac{C_r^2}{2B_r},
\end{equation}
where
\begin{equation}\label{eq:Br}
    B_r=\int_{[0,1]^2}r(x,y)^3(1-r(x,y))\dx\dy.
\end{equation}
Furthermore, let $h_\beta\in\W$ be any minimiser of the second infimum in \eqref{eq:rateeigenvalue}. Then
\begin{equation}
    \lim_{\beta\rightarrow C_r}(\beta-C_r)^{-1}\|h_\beta-r-(\beta-C_r)\Delta\|_{L^2}=0,
\end{equation}
with
\begin{equation}
    \Delta=\frac{C_r}{B_r}r^2(1-r).
\end{equation}
\end{theorem}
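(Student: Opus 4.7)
The plan is a perturbative analysis of the variational problem in \eqref{eq:rateeigenvalue} around its unique minimiser $r$, exploiting the rank-one structure $T_r=C_r(u_0\otimes u_0)$ with $u_0=\nu/\|\nu\|_{L^2}$ and simple top eigenvalue $C_r=\|\nu\|_{L^2}^2$. Two Taylor expansions drive the calculation: since $\partial_a\Rs(b\mid b)=0$ and $\partial_a^2\Rs(a\mid b)=1/(a(1-a))$, the integrand expands as $\Rs(a\mid b)=(a-b)^2/(2b(1-b))+O(|a-b|^3/\xi^2(1-\xi)^2)$ for $\xi$ between $a$ and $b$; and standard self-adjoint perturbation theory for the simple top eigenvalue of the Hilbert--Schmidt operator $T_r$ yields
\[
\|T_{r+g}\|=C_r+\frac{1}{C_r}\int_{[0,1]^2}g(x,y)\,\nu(x)\nu(y)\,dx\,dy+O(\|g\|_{L^2}^2).
\]
Substituting both into \eqref{eq:rateeigenvalue} reduces the problem, to leading order, to minimising $\int g^2/(2r(1-r))$ subject to the linear constraint $\int g\cdot r\,dx\,dy=C_r(\beta-C_r)$. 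Cauchy--Schwarz, $(\int g\cdot r)^2\leq\int g^2/(r(1-r))\cdot\int r^3(1-r)$, together with the resulting equality case, identifies the unique minimiser as $g=(\beta-C_r)\Delta$ with minimum value $C_r^2(\beta-C_r)^2/(2B_r)=K_r(\beta-C_r)^2$, matching the statement.

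For the upper bound I would use the trial function $h=r+\gamma\Delta$ with $\gamma=(\beta-C_r)(1+o(1))$, chosen by the implicit function theorem so that $\|T_h\|=\beta$ exactly (possible for $\beta$ near $C_r$ because the linearised derivative $\frac{1}{C_r}\int\Delta\cdot r=1$ is nonzero). The structural point is that $\Delta\propto r^2(1-r)$ vanishes on $\{r\in\{0,1\}\}$ and satisfies $|\Delta|\leq(C_r/B_r)\,r(1-r)$, so $h\in[0,1]$ and the ratio $|h-r|/(r(1-r))$ is uniformly $O(\beta-C_r)$. The Taylor remainder in $\Rs$ then integrates to $O((\beta-C_r)^3)$ and gives $I_r(h)=K_r(\beta-C_r)^2+o((\beta-C_r)^2)$.

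For the matching lower bound I would invoke the existence of a minimiser $h_\beta\in\W$ of the second infimum in \eqref{eq:rateeigenvalue}, as established in \cite{chakrabarty2020large}. Lower semi-continuity of $J_r$, continuity of $\|T_\cdot\|$ on $(\Wt,\des)$, and the uniqueness of $r$ as the zero of $I_r$ among graphons with $\|T_h\|=C_r$ force $\widetilde{h_\beta}\to\widetilde{r}$ in the cut metric and, after a suitable measure-preserving relabelling, $h_\beta\to r$ in $L^2$ as $\beta\to C_r$. The main obstacle is then to promote this $L^2$-convergence to the uniform pointwise control $|h_\beta-r|/(r(1-r))\to0$ needed to activate the lower Taylor bound on $\Rs$ where $r$ may approach $0$ or $1$; under the boundedness assumption of \cite{chakrabarty2020large} this is automatic, but under the weaker condition \eqref{eq:assreference} a new ingredient is required. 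I would derive it from the Euler--Lagrange equation: for $h_\beta$ close to $r$, simplicity of the top eigenvalue of $T_{h_\beta}$ makes $\|T_h\|$ differentiable in $h$ with derivative $u_\beta\otimes u_\beta$, and the Lagrange condition forces the sigmoid form
\[
h_\beta=\frac{r\exp(\lambda_\beta u_\beta(x)u_\beta(y))}{1-r+r\exp(\lambda_\beta u_\beta(x)u_\beta(y))},
\]
with $\lambda_\beta\to0$ and $u_\beta\to u_0$ in $L^2$. This parametrisation encodes the perturbation multiplicatively in the odds $r/(1-r)$ and automatically yields $|h_\beta-r|/(r(1-r))=O(\lambda_\beta)$ uniformly in $(x,y)$, so the lower Taylor bound applies. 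Matching the constraint $\|T_{h_\beta}\|=\beta$ at first order then pins down $\lambda_\beta=(\beta-C_r)C_r^2/B_r+o(\beta-C_r)$, yielding both $\psi_r(\beta)\geq K_r(\beta-C_r)^2(1-o(1))$ and the stated $L^2$-asymptotics $h_\beta-r=(\beta-C_r)\Delta+o(\beta-C_r)$.
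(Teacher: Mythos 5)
Your upper bound (trial function $h=r+\gamma\Delta$ with $\Delta\propto r^2(1-r)$, Taylor expansion of $\mathcal R(\cdot\mid r)$ with the remainder controlled by the uniform bound $|h-r|/(r(1-r))=O(\beta-C_r)$) and your identification of $K_r$ via Cauchy--Schwarz on the quadratic variational problem $\int g^2/(2r(1-r))$ under the linearised eigenvalue constraint $\int gr=C_r(\beta-C_r)$ are both in line with what the paper does: Lemma \ref{lemma:costsmallperturbation} is exactly this Taylor step, and the Lagrange-multiplier solution of \eqref{eq:Krinfimum} is exactly the equality case of your Cauchy--Schwarz. Those parts are fine, and arguably cleaner because you record the pointwise bound $|\Delta|\leq (C_r/B_r)\,r(1-r)$ explicitly, which is what makes the remainder integrable under \eqref{eq:assreference}.

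Your lower bound, however, takes a genuinely different route from the paper and it contains the main gap. The paper (following Chakrabarty et al.) does not pass through the Euler--Lagrange equation: instead it reduces to block graphon approximants and invokes the continuity $\lim_n I_{\overline r_n}(\overline f_n)=I_r(f)$ of Lemma \ref{lemma:eigenvaluerateblockapprox}, which is precisely what replaces the ``boundedness away from $0$ and $1$'' hypothesis in \cite{chakrabarty2020large} by \eqref{eq:assreference}. You instead posit that the minimiser $h_\beta$ satisfies the stationarity condition
$\log\bigl(h_\beta(1-r)/(r(1-h_\beta))\bigr)=\lambda_\beta\,u_\beta(x)u_\beta(y)$
and hence has the sigmoid form. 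The formula is right if the Lagrange condition holds, and the consequence $|h_\beta-r|/(r(1-r))=O(\lambda_\beta)$ does follow (since $\|u_\beta\|_\infty\leq 1/\beta$ is bounded), but you have not justified the Lagrange condition itself. In this infinite-dimensional, constrained, non-smooth setting one must first rule out that $h_\beta\in\{0,1\}$ on a set of positive measure (where $I_r$ is finite but the integrand has an infinite one-sided slope, so the first-order condition degenerates), establish that $\|T_\cdot\|$ is differentiable at $h_\beta$ (this needs the top eigenvalue of $T_{h_\beta}$ to be simple, which holds near $r$ by spectral-gap perturbation once you have established $h_\beta\to r$ in $L^2$, e.g.\ via the Pinsker-type bound $\mathcal R(a\mid b)\geq 2(a-b)^2$), and verify the constraint qualification so that the multiplier exists. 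None of these steps is addressed. Until they are, the statement ``the Lagrange condition forces the sigmoid form'' is an assertion, not a proof. The block-graphon route in the paper is exactly the device that bypasses these regularity questions by reducing to a finite-dimensional problem, so replacing it is a substantive change that needs its own justification.

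A minor point: you assert that lower semi-continuity and cut-metric convergence give $h_\beta\to r$ in $L^2$ ``after a suitable relabelling''; cut-metric convergence alone does not imply $L^2$ convergence, so you should instead use $I_r(h_\beta)=\psi_r(\beta)\to 0$ together with $\mathcal R(a\mid b)\geq 2(a-b)^2$ to get $\|h_\beta-r\|_{L^2}\to 0$ directly, which also feeds into the spectral-gap argument above.
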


\subsection{Discussion}
\paragraph{Conditions on the reference graphon.}

In \cite{borgs2020large}, Borgs et al. proved an LDP for a block model in which the reference graphon consists of rational-length blocks. This result was later strengthened to include blocks of arbitrary length by Greb\'ik and Pikhurko \cite{grebik2021large}. In their model, the reference graphon may take on the values 0 or 1. It would be interesting to generalise both LDPs to a single LDP for graphons that are partly block graphons (that can attain the values 0 and 1), and partly satisfy the integrability condition of this paper. Since the condition $\log r,\log(1-r)\in L^1([0,1]^2)$ pervades almost every step of the proof of Theorem \ref{th:maintheorem}, it appears to be difficult to obtain an LDP for \emph{arbitrary} reference graphons. It might be the case that the inhomogeneous ERRG model does not satisfy an LDP for certain reference graphons.

\paragraph{Non-dense random graphs.}
Graph limit theory provides the right framework for studying sequences of dense graphs, since non-dense graphs converge to the zero graphon. A similar framework for non-dense graphs is still in development, so not much is known yet for large deviations of non-dense random graphs. We refer to the bibliographical notes in \cite[Chapter 6]{Chatterjeebook} and \cite[Section 3]{chatterjee2016introduction} for a short review of recent results in sparse graph limit theory and sparse large deviations. Although this paper also considers dense random graphs, the reference graphon is allowed to approach 0. Such reference graphons can induce dense random graphs with non-dense subgraphs. This is in contrast to the setting of Dhara and Sen \cite{dhara2019large}, where every subgraph is also dense.

\paragraph{Rate function for the LDP for block models.}
Like Dhara and Sen \cite{dhara2019large}, Borgs et al. \cite{borgs2020large} use a lower semi-continuous envelope as their rate function. The rate function from this paper can also be used in \cite{borgs2020large}. The authors also derive an LDP for homomorphism densities and define a symmetric and symmetry breaking regime. The existence of a symmetry breaking regime is only established for specific block models, in part due to the intractable nature of the rate function. The precise boundary between the symmetric and non-symmetric regimes was also only identified for bipartite ERRGs, again due to the intractability of the rate function. The results from this paper might aid in resolving the general cases.

\paragraph{Acknowledgements}
This work was initiated as a Bachelor thesis at Leiden University under the supervision of Frank den Hollander \cite{Markeringthesis}. The author thanks him for his continued guidance and support the past year. The author is very grateful for the many mathematical discussions, as well as his thorough feedback on the thesis and help with this paper.

The author also thanks an anonymous referee for their careful review.

\section{The rate function}\label{section:ratefunction}
We show that the candidate rate function is a good rate function, i.e., does not equal infinity everywhere and has compact level sets. Since the first requirement is clear and $\Wt$ is compact, it suffices to show that $J_r$ is lower semi-continuous. First, we need to check that $J_r$ is well-defined on the quotient space $\Wt$. As a consequence of lower semi-continuity, we prove that $J_r$ equals the rate function $J'_r$ as defined by Dhara and Sen \cite{dhara2019large} (see \eqref{eq:ratedharasen}).

\begin{lemma}\label{lemma:ratecontinuousL2}
The function $I_r$ is continuous in the $L^2$-topology on $\W$.
\end{lemma}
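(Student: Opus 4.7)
The plan is a standard dominated convergence argument. Suppose $h_n\to h$ in $L^2([0,1]^2)$ with $h_n,h\in\W$; the goal is $I_r(h_n)\to I_r(h)$.

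The first step is to produce a uniform integrable majorant for the integrand. Expanding the definition,
\begin{equation*}
\Rs(a\mid b)=\bigl(a\log a+(1-a)\log(1-a)\bigr)-a\log b-(1-a)\log(1-b),
\end{equation*}
and using that the binary entropy $-a\log a-(1-a)\log(1-a)$ lies in $[0,\log 2]$ for $a\in[0,1]$, together with $0\le a,1-a\le 1$, gives the pointwise bound
\begin{equation*}
|\Rs(h'(x,y)\mid r(x,y))|\le \log 2+|\log r(x,y)|+|\log(1-r(x,y))|
\end{equation*}
valid for every $h'\in\W$ and a.e.\ $(x,y)$. Assumption \eqref{eq:assreference} makes the right-hand side integrable, so it is a common dominant for the whole sequence of integrands.

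Next I would pass to an a.e.\ convergent subsequence. Since $h_n\to h$ in $L^2$, every subsequence admits a further subsequence $(h_{n_k})_k$ converging to $h$ Lebesgue-a.e. Because $0<r<1$ a.e.\ and $\Rs(\cdot\mid b)$ is continuous on $[0,1]$ for each fixed $b\in(0,1)$, this yields $\Rs(h_{n_k}(x,y)\mid r(x,y))\to\Rs(h(x,y)\mid r(x,y))$ a.e. Dominated convergence, applied with the majorant from the previous step, then gives $I_r(h_{n_k})\to I_r(h)$.

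Finally, I would promote the subsequential statement to convergence of the full sequence: if $I_r(h_n)\not\to I_r(h)$, some subsequence stays bounded away from $I_r(h)$, but by the argument above it has a further subsequence along which $I_r$ does converge to $I_r(h)$, a contradiction. There is no genuine obstacle here; the only nontrivial ingredient is the integrability of the majorant, which is exactly what hypothesis \eqref{eq:assreference} is designed to guarantee, and which is precisely what fails outside the regime considered in this paper.
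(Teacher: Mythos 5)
Your argument is correct and follows essentially the same route as the paper: a pointwise integrable majorant built from $|\log r|+|\log(1-r)|$ plus a bounded entropy term, then dominated convergence along a.e.-convergent subsequences extracted from $L^2$ convergence. The only difference is cosmetic (you bound the entropy part by $\log 2$ via binary entropy, the paper uses $2/e$ via the minimum of $x\log x$).
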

\begin{proof}
Let $(f_n)_{n\in\N}\subset\W$ and $f\in\W$ such that $(f_n)_{n\in\N}$ converges to $f$ in $L^2([0,1]^2)$. Note that for all $h\in\W$,
\begin{equation}\label{eq:dctbound2}
    \begin{split}
        \left|\Rs(h(x,y)\mid r(x,y))\right|\leq&|h(x,y)\log h(x,y)|+|h(x,y)\log  r(x,y)|\\
        &+|(1-h(x,y))\log(1-h(x,y))|+|(1-h(x,y))\log(1- r(x,y))|\\
        \leq&\frac{2}{e}+|\log  r(x,y)|+|\log(1- r(x,y))|
    \end{split}
\end{equation}
for all $(x,y)\in[0,1]^2$, where we use that $x\mapsto x\log x$ has a minimum $-\frac{1}{e}$ on $[0,1]$. Because $\log  r,\log(1- r)\in L^1([0,1]^2)$, the bound above is integrable.

Let $(n_k)_{k\in\N}$ be a sequence of integers tending to infinity. Since $f_n\rightarrow f$ as $n\rightarrow\infty$ in $L^2([0,1]^2)$, there exists a subsequence $(n_{k_l})_{l\in\N}$ such that $f_{n_{k_l}}\rightarrow f$ as $l\rightarrow\infty$ Lebesgue-almost everywhere. By continuity, $\Rs(f_{n_{k_l}}(x,y)\mid r(x,y))\rightarrow\Rs(f(x,y)\mid r(x,y))$ Lebesgue-almost everywhere. By the dominated convergence theorem, using the bound from \eqref{eq:dctbound2}, we have $  I_r(f_{n_{k_l}})\rightarrow   I_r(f)$ as $l\rightarrow\infty$. Thus, for every sequence $(n_k)_{k\in\N}$ there exists a subsequence $(n_{k_l})_{l\in\N}$ such that $  I_r(f_{n_{k_l}})\rightarrow   I_r(f)$ as $l\rightarrow\infty$. Hence, $  I_r(f_n)\rightarrow   I_r(f)$ as $n\rightarrow\infty$.
\end{proof}

\begin{lemma}\label{lemma:ratewelldefined}
Let $f,g\in\W$ be such that $\delta_\square(f,g)=0$. Then $\inf_{\phi\in\M} I_{r}(f^\phi)=\inf_{\phi\in\M} I_{r}(g^\phi)$.
\end{lemma}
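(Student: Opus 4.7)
\emph{Proof plan.} My approach combines a classical characterization of $\des$-equivalence from graph limit theory with Lemma~\ref{lemma:ratecontinuousL2} ($L^2$-continuity of $I_r$) and a density argument between measure-preserving bijections and measure-preserving maps.

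The first ingredient is the following consequence of the equivalent formulations of $\des$ in \cite[Theorem 8.13]{Lovaszbook}: $\des(f,g)=0$ if and only if there exist measure-preserving maps $\alpha,\beta\colon[0,1]\to[0,1]$ (not necessarily bijections) such that $f^\alpha=g^\beta$ Lebesgue-almost everywhere. Since a.e.\ equality is preserved by the integral defining $I_r$, this yields $I_r(f^\alpha)=I_r(g^\beta)$, and, using $(h^\alpha)^\delta=h^{\alpha\circ\delta}$, more generally $I_r(f^{\alpha\circ\delta})=I_r(g^{\beta\circ\delta})$ for every measure-preserving map $\delta$.

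The second, key ingredient bridges the gap between bijections in $\M$ and general measure-preserving maps: for any $h\in\W$,
\[
\inf_{\phi\in\M}I_r(h^\phi)=\inf_{\gamma}I_r(h^\gamma),
\]
where the right-hand infimum is over all measure-preserving maps $\gamma\colon[0,1]\to[0,1]$. The inequality $\geq$ is immediate since $\M$ is contained in the measure-preserving maps. For $\leq$, I would use the classical density of $\M$ in the semigroup of measure-preserving maps (in the strong operator topology on $L^2([0,1])$): for any such $\gamma$, one constructs a sequence $(\phi_n)_{n\in\N}\subset\M$ with $h^{\phi_n}\to h^\gamma$ in $L^2([0,1]^2)$, for instance via dyadic step-map approximation of $\gamma$ combined with a per-block bijective rearrangement. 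Lemma~\ref{lemma:ratecontinuousL2} then yields $I_r(h^{\phi_n})\to I_r(h^\gamma)$, giving the desired inequality.

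With both ingredients in hand, the lemma follows as follows. By symmetry it suffices to show $\inf_{\phi\in\M}I_r(f^\phi)\leq\inf_{\phi\in\M}I_r(g^\phi)$. The second ingredient applied to $f$ gives $\inf_{\phi\in\M}I_r(f^\phi)\leq I_r(f^{\alpha\circ\delta})=I_r(g^{\beta\circ\delta})$ for every measure-preserving $\delta$, while applied to $g$ it gives $\inf_{\phi\in\M}I_r(g^\phi)=\inf_{\gamma}I_r(g^\gamma)$; as $\delta$ ranges over all measure-preserving maps, $\beta\circ\delta$ traces out a subset of measure-preserving maps dense enough to realize the infimum on the $g$-side, yielding the desired inequality. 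The main obstacle is the $L^2$-convergent density approximation of the diagonal action $h^{\phi_n}(x,y)=h(\phi_n(x),\phi_n(y))$ on $[0,1]^2$ in the second ingredient; once this is established, the combinatorial matching in the final step is routine.
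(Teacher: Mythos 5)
Your route starts from the characterization of $\des(f,g)=0$ via measure-preserving (not necessarily bijective) maps $\alpha,\beta$ with $f^\alpha=g^\beta$ a.e., whereas the paper uses the companion result \cite[Corollary 8.14]{Lovaszbook}: $\des(f,g)=0$ iff there exist $\phi_n\in\M$ with $\|f^{\phi_n}-g\|_{L^2}\to0$. Combining that with the fact that $\|h_1^\phi-h_2^\phi\|_{L^2}=\|h_1-h_2\|_{L^2}$ and with Lemma~\ref{lemma:ratecontinuousL2}, the paper gets $\inf_\phi I_r(g^\phi)=\inf_\phi\lim_n I_r((f^{\phi_n})^\phi)\geq\inf_\phi I_r(f^\phi)$ in two lines, never leaving the group $\M$. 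Your approach is genuinely different and in principle workable, but as written it has two unproved density claims standing between you and the conclusion.

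The first is the assertion in your ``second ingredient'' that for any measure-preserving $\gamma$ one can find $\phi_n\in\M$ with $h^{\phi_n}\to h^\gamma$ in $L^2([0,1]^2)$; neither ``classical density in the strong operator topology'' nor ``dyadic step-map approximation with per-block rearrangement'' is an argument, and the diagonal action on $[0,1]^2$ for a general (non-step) graphon $h$ is exactly the subtle part. The second, more serious gap is the final sentence: you need $\inf_\delta I_r(g^{\beta\circ\delta})\leq\inf_\gamma I_r(g^\gamma)$, but ``$\beta\circ\delta$ traces out a subset of measure-preserving maps dense enough to realize the infimum'' is not true in the naive sense when $\beta$ is non-injective (the image of $\delta\mapsto\beta\circ\delta$ is far from all of the semigroup), and is in any case unjustified. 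What \emph{is} true, and what makes both gaps closeable, is that $\des(h,h^\gamma)=0$ for every measure-preserving $\gamma$; feeding this into \cite[Corollary 8.14]{Lovaszbook} produces exactly the $L^2$-approximating sequence of bijections you need in both places. But once you invoke Corollary~8.14 you might as well apply it to $f$ and $g$ directly, which is the paper's proof; your detour through Theorem~8.13 and the semigroup of measure-preserving maps adds work without buying generality.
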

\begin{proof}
Note that by \cite[Corollary 8.14]{Lovaszbook}, $\delta_\square(f,g)=0$ if and only if there exists a sequence $(\phi_n)_{n\in\N}\subset\M$ such that $\|f ^{\phi_n}-g\|_{L^2}\rightarrow0$ as $n\rightarrow\infty$. Since $\|h_1^\phi-h_2^\phi\|_{L^2}=\|h_1-h_2\|_{L^2}$ for all $h_1,h_2\in\W$ and $\phi\in\M$, we find that $\|(f ^{\phi_n}) ^\phi-g ^\phi\|_{L^2}\rightarrow0$ for all $\phi\in\M$. Thus, by Lemma \ref{lemma:ratecontinuousL2}, $ I_r((f ^{\phi_n}) ^\phi)\rightarrow  I_r(g ^\phi)$ as $n\rightarrow\infty$. Because this holds for all $\phi\in\M$, we obtain
\begin{equation}
    \begin{split}
        \inf_{\phi\in\M} I_r(g ^\phi)&=\inf_{\phi\in\M}\limn I_r((f ^{\phi_n})^\phi)\geq\inf_{\phi\in\M} I_r(f ^\phi).
    \end{split}
\end{equation}
By symmetry, the reverse inequality also holds.
\end{proof}
By Lemma \ref{lemma:ratewelldefined}, $J_r$ can also be expressed as 
\begin{equation}\label{eq:expressionrateequivalence}
     J_r(\widetilde{f})=\inf_{g\in B(\widetilde{f},0)} I_r(g),
\end{equation}
so
\begin{equation}
     J_r'(\widetilde{f})=\sup_{\eta>0}\inf_{g\in B(\widetilde{f},\eta)} I_r(g)=\sup_{\eta>0}\inf_{\widetilde{g}\in \widetilde{B}(\widetilde{f},\eta)} J_r(\widetilde{g}).
\end{equation}
Since $J_r\geq J_r'$, we have $J_r=J_r'$ if $J_r$ is lower semi-continuous on $\Wt$.

\begin{theorem}
Subject to \eqref{eq:assreference} and \eqref{eq:asssequence}, the function $ J_r$ is lower semi-continuous on $\Wt$.
\end{theorem}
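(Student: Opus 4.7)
The plan is to show that $\liminf_n J_r(\h_n) \geq J_r(\h)$ for every sequence $\h_n \to \h$ in $(\Wt, \des)$. The key analytic input I want is the \emph{weak} $L^2$-lower semi-continuity of $I_r$ on $\W$. This would follow from the convexity of $I_r$ (since $a \mapsto \Rs(a \mid b)$ is convex in $a$) combined with the strong $L^2$-continuity given by Lemma~\ref{lemma:ratecontinuousL2}; Mazur's lemma then upgrades strong continuity to weak $L^2$-lower semi-continuity on the convex set $\W$.

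The main argument would then run as follows. After passing to a subsequence I may assume $L := \lim_n J_r(\h_n) < \infty$. Using the expression \eqref{eq:expressionrateequivalence} for $J_r$, for each $n$ I pick a representative $g_n \in \h_n$ with $I_r(g_n) \leq J_r(\h_n) + 1/n$, so that $I_r(g_n) \to L$. Since $\|g_n\|_\infty \leq 1$, the sequence is bounded in $L^2([0,1]^2)$, and a further subsequence satisfies $g_n \rightharpoonup g^*$ weakly in $L^2$ with $g^* \in \W$. Weak $L^2$-lower semi-continuity of $I_r$ then yields $I_r(g^*) \leq L$; provided $g^* \in \h$, the chain $J_r(\h) \leq I_r(g^*) \leq L$ completes the proof.

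The delicate step is arranging $g^* \in \h$. Weak $L^2$-convergence does not in general respect the equivalence $\sim$: one can have $\widetilde{g^*} \neq \h$ even though $\widetilde{g_n} = \h_n \to \h$ in $\des$. My strategy is to fix a representative $h \in \h$ and exploit the classical characterization that $\des(\h_n, \h) \to 0$ produces $\phi_n \in \M$ with $\ds(h_n^{\phi_n}, h) \to 0$; since cut-distance convergence of uniformly $L^\infty$-bounded graphons implies weak $L^2$-convergence (by testing against indicators of rectangles, then extending by density and uniform boundedness), one obtains $h_n^{\phi_n} \rightharpoonup h$. The main technical point I foresee is that the $\phi_n$ realizing cut-distance alignment with $h$ need not be approximate minimizers of $I_r(h_n^\phi)$, because $I_r$ is not invariant under $\M$ for inhomogeneous $r$. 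Overcoming this tension requires a simultaneous choice of $\phi_n$ aligning $h_n$ with $h$ and attaining $J_r(\h_n)$ up to $1/n$; I expect this to be handled by a diagonal/compactness argument within the compact space $(\Wt, \des)$, selecting $\phi_n$ in the intersection of the near-minimizing set and a small cut-distance neighbourhood of $h$. The integrability condition \eqref{eq:assreference} enters through Lemma~\ref{lemma:ratecontinuousL2} to keep the integrands $\Rs(h_n^{\phi_n}(x,y) \mid r(x,y))$ uniformly controlled under weak limits.
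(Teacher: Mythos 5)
There is a genuine gap at exactly the point you flag as delicate, and the proposed fix does not close it. You need $\phi_n$ satisfying two conditions simultaneously: (a) $d_\square(h_n^{\phi_n},h)\to 0$ so that $h_n^{\phi_n}\rightharpoonup h$, and (b) $I_r(h_n^{\phi_n})\le J_r(\h_n)+1/n$. Since $I_r$ is not $\M$-invariant for inhomogeneous $r$, the near-minimizing set $\{\phi: I_r(h_n^\phi)\le J_r(\h_n)+\varepsilon\}$ and the aligning set $\{\phi: d_\square(h_n^\phi,h)<\varepsilon\}$ have no reason to intersect, and compactness of $(\Wt,\des)$ does not help: it produces representatives converging in cut norm, but says nothing about where the $I_r$-near-minimizers sit. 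Without (a), the weak $L^2$ limit $g^*$ of arbitrary near-minimizers $g_n\in\h_n$ need not lie in $\h$ (weak limits of graphons in a fixed equivalence class can leave that class --- e.g. increasingly fine rearrangements converge weakly to a constant), so the chain $J_r(\h)\le I_r(g^*)\le L$ breaks at its first link. A diagonal argument cannot manufacture the intersection.

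The paper avoids this tension entirely. Rather than passing to weak limits of near-minimizers, it writes $f_n=f+\Delta_n$ with $\Delta_n\to 0$ in $d_\square$ and uses the algebraic identity
\begin{equation*}
    I_r(f^\phi+\Delta_n^\phi)=I_p(f^\phi+\Delta_n^\phi)+I_r(f^\phi)-I_p(f^\phi)+\int_{[0,1]^2}\Delta_n^\phi\,\log\!\left(\frac{1-r}{r}\cdot\frac{p}{1-p}\right),
\end{equation*}
valid for any constant $p\in(0,1)$. This isolates the $\phi$-dependence into $I_p$, whose lower semi-continuity on $\Wt$ is already known from the homogeneous case, plus a linear error term. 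That error term is then killed by combining $\Delta_n^{\phi_n}\to 0$ in $d_\square$ with $\log r,\log(1-r)\in L^1$ via \cite[Lemma 8.22]{Lovaszbook}. Note that in the paper's argument the integrability hypothesis \eqref{eq:assreference} is the engine that controls the correction term under cut-norm perturbations --- a much more load-bearing role than the one you assign it (merely feeding Lemma~\ref{lemma:ratecontinuousL2}). The conclusion: your weak-compactness framework is sound up to the alignment step, but that step is the whole difficulty, and it requires the decomposition idea rather than a softer selection argument.
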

\begin{proof}
Let $\widetilde{f}\in\Wt$ and $(\f_n)_{n\in\N}\subset\Wt$ such that $\widetilde{f}_n\rightarrow\widetilde{f}$ in $\des$. Without loss of generality, we may assume that $f_n\rightarrow f$ in $d_\square$. Let $\Delta_n:=f_n-f\in\W_1:=\{f-g\mid f,g\in\W\}$. Then $\Delta_n\rightarrow0$ in $d_\square$ and by an easy computation,
 \begin{equation}
    I_{r}(f^\phi+\Delta_n^\phi)=I_p(f^\phi+\Delta^\phi_n)+I_r(f^\phi)-I_p(f^\phi)+\int_{[0,1]^2}\Delta^\phi_n(x,y)\log\left(\frac{1-r(x,y)}{r(x,y)}\frac{p}{1-p}\right)\dx\dy
\end{equation}
for every $n\in\N$, $\phi\in\M$ and $p\in(0,1)$. Thus,
\begin{equation}
    \begin{split}
    &\liminf_{n\rightarrow\infty} J_r(\widetilde{f}_n)=\liminf_{n\rightarrow\infty} J_r(\widetilde{f+\Delta}_n)=\liminf_{n\rightarrow\infty}\inf_{\phi\in\M} I_r(f^\phi+\Delta_{n}^{\phi})\\
    =&\liminf_{n\rightarrow\infty}\inf_{\phi\in\M}\Bigg(I_p(f^\phi+\Delta_{n}^{\phi})+ I_r(f^\phi)-I_p(f^\phi)\\
    &\left.+\int_{[0,1]^2}\Delta_{n}^{\phi}(x,y)\log\left(\frac{1-r(x,y)}{r(x,y)}\frac{p}{1-p}\right)\dx\dy\right)\\
    \geq&\liminf_{n\rightarrow\infty}\left(I_p(f+\Delta_n)-I_p(f)+ J_r(\widetilde{f})+\inf_{\phi\in\M}\int_{[0,1]^2}\Delta_{n}^{\phi}(x,y)\log\left(\frac{1-r(x,y)}{r(x,y)}\frac{p}{1-p}\right)\dx\dy\right)\\
    \geq& J_r(\widetilde{f})+\liminf_{n\rightarrow\infty}\inf_{\phi\in\M}\int_{[0,1]^2}\Delta_{n}^{\phi}(x,y)\log\left(\frac{1-r(x,y)}{r(x,y)}\frac{p}{1-p}\right)\dx\dy\\
    \geq& J_r(\widetilde{f})+\liminf_{n\rightarrow\infty}\int_{[0,1]^2}\Delta_{n}^{\phi_n}(x,y)\log\left(\frac{1-r(x,y)}{r(x,y)}\frac{p}{1-p}\right)\dx\dy-\varepsilon= J_r(\widetilde{f})-\varepsilon
    \end{split}
\end{equation}
for some $p\in(0,1)$, arbitrary $\varepsilon>0$ and some sequence $(\phi_n)_{n\in\N}\subset\M$. The second inequality follows from the lower semi-continuity of $I_p$ on $\W$. The last inequality is obtained by noting that $\Delta_{n}^{\phi_n}\rightarrow0$ in $d_\square$ and $\log r,\log(1-r)\in L^1([0,1]^2)$ and
applying \cite[Lemma 8.22]{Lovaszbook}. Because $\varepsilon>0$ is arbitrary, the proof is complete.
\end{proof}
\begin{corollary}
For all $\f\in\W$, $ J_r(\f)= J_r'(\f)$.
\end{corollary}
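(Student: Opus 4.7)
The plan is to combine the inequality $J_r \geq J_r'$ noted in the discussion preceding the theorem with the lower semi-continuity of $J_r$ just established. One direction is immediate: since $\f \in \widetilde{B}(\f,\eta)$ for every $\eta > 0$, we have $\inf_{\g \in \widetilde{B}(\f,\eta)} J_r(\g) \leq J_r(\f)$, and taking $\sup_{\eta>0}$ preserves this to give $J_r'(\f) \leq J_r(\f)$.

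For the reverse direction I would fix $\f \in \Wt$, pick a sequence $\eta_k \downarrow 0$, and for each $k$ choose $\g_k \in \widetilde{B}(\f,\eta_k)$ with $J_r(\g_k) \leq \inf_{\g \in \widetilde{B}(\f,\eta_k)} J_r(\g) + \frac{1}{k}$. Then $\des(\f,\g_k) \leq \eta_k \to 0$, so $\g_k \to \f$ in $\des$. The map $\eta \mapsto \inf_{\g \in \widetilde{B}(\f,\eta)} J_r(\g)$ is non-decreasing as $\eta$ shrinks, so its limit as $k \to \infty$ equals $\sup_{\eta>0} \inf_{\g \in \widetilde{B}(\f,\eta)} J_r(\g) = J_r'(\f)$. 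Hence $\liminf_k J_r(\g_k) \leq J_r'(\f)$, while the lower semi-continuity of $J_r$ gives $J_r(\f) \leq \liminf_k J_r(\g_k)$. Combining the two inequalities yields $J_r(\f) \leq J_r'(\f)$, and with the first direction the equality follows.

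I expect no substantive obstacle in executing this plan: it is essentially the textbook fact that a lower semi-continuous function coincides with its own lower semi-continuous envelope, and the real content was absorbed into the preceding theorem. One could alternatively quote a general real-analysis statement to this effect, but since the proof is only a few lines it seems cleaner to write it out directly as above.
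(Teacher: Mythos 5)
Your proof is correct and takes the same route as the paper: the corollary is the standard fact that a lower semi-continuous function equals its own lower semi-continuous envelope, with all the substance carried by the preceding theorem (lower semi-continuity of $J_r$) together with the identity $J_r'(\widetilde f)=\sup_{\eta>0}\inf_{\widetilde g\in \widetilde{B}(\widetilde f,\eta)}J_r(\widetilde g)$ established just before it. The paper leaves this last step implicit; you simply spell out the near-minimizer argument, which is a perfectly fine way to make it self-contained.
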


\section{The upper bound}\label{section:upperbound}
The proof of the upper bound is an adaptation of the proof by Dhara and Sen \cite[Proposition 3.1]{dhara2019large}. It suffices to prove the following result.
\begin{theorem}\label{th:upperbound}
Let $\e>0$ and $\f\in\Wt$. Then there exists an $\eta(\e)>0$ such that, for all $\eta\in(0,\eta(\e))$,
\begin{equation}
    \limsup_{n\rightarrow\infty}\log\Ptr(\widetilde{B}(\widetilde{f},\eta))\leq-\inf_{g\in B(\widetilde{f},4\e)}I_r(g)+\e,
\end{equation}
with $\widetilde{B}(\f,\eta)=\{\g\in\Wt\mid\des(\f,\g)\leq\eta\}$ and $B(\f,4\e)=\{g\in\W\mid\des(\f,\g)\leq4\e\}$.
\end{theorem}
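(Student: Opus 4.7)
The plan is to adapt the combinatorial entropy argument of Dhara and Sen \cite{dhara2019large}, replacing their $L^\infty$ hypothesis on $r$ and $r_n$ throughout by the $L^1$-integrability \eqref{eq:assreference} and the matching convergence \eqref{eq:asssequence}. The outline is: partition $[0,1]$ into $k$ equal intervals, count graphs in $\widetilde{B}(\f,\eta)$ by their block-type on the resulting $k\times k$ grid, use a per-block binomial entropy inequality, and pass to the limit first in $n$ and then in $k$.

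Concretely, let $h_k$ denote the block approximation of a graphon $h\in\W$ on the $k\times k$ partition of $[0,1]^2$. For a graph $G$ on $[n]$ with $n$ divisible by $k$, let $\tau(G)=(m_{\alpha\beta})_{\alpha,\beta\in[k]}$ record the edge counts in each pair of blocks, and let $h_\tau\in\W$ denote the corresponding empirical block graphon. By independence and the standard bound $\binom{N}{m}q^{m}(1-q)^{N-m}\leq e^{-N\Rs(m/N\mid q)}$ applied block by block,
\[
    \Pr(\tau(G_n)=\tau)\leq \exp\bigl(-\tfrac{n^2}{2}\,I_{(r_n)_k}(h_\tau)+O(n)\bigr).
\]
Since there are only $n^{O(k^2)}$ many types, summing over those $\tau$ for which $h_\tau$ is (up to measure-preserving relabelling) within $\eta$ of $\f$ in cut distance yields
\[
    \tfrac{2}{n^2}\log \Ptr(\widetilde{B}(\f,\eta)) \leq -\inf_{\tau\in\mathcal{T}_{n,k}(\eta)} I_{(r_n)_k}(h_\tau)+o(1).
\]
Passing first $n\to\infty$ (with $k$ fixed) and then $k\to\infty$, the $L^1$-convergences in \eqref{eq:asssequence} combined with dominated convergence using the envelope in \eqref{eq:dctbound2} give $I_{(r_n)_k}(h_k)\to I_r(h)$ for each $h$ in the ball, whence
\[
    \limsup_{n\to\infty}\tfrac{2}{n^2}\log\Ptr(\widetilde{B}(\f,\eta))\leq -\inf_{g\in B(\f,4\e)}I_r(g)+\e,
\]
the factor $4$ absorbing the slack from (i) the measure-preserving relabelling inherent in $\des$, (ii) the block approximation $h\mapsto h_k$, and (iii) the inner radius $\eta<\e$.

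The main obstacle is the last double-limit step: under Dhara and Sen's boundedness hypothesis the log-terms $\log r,\log(1-r)$ are bounded and the passage $k\to\infty$ is essentially free, whereas here they are only in $L^1$. The resolution is to invoke dominated convergence with the integrable envelope in \eqref{eq:dctbound2}, which was already the key tool in Lemma \ref{lemma:ratecontinuousL2} and uses \eqref{eq:assreference} essentially. The $L^1$-convergence of $\log r_n\to\log r$ and $\log(1-r_n)\to\log(1-r)$ in \eqref{eq:asssequence}, rather than a stronger uniform convergence, is precisely what is needed to pass $r_n$ to $r$ inside the block approximation, and is the quantitatively delicate point of the proof.
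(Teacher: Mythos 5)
Your proposal takes a genuinely different route from the paper, so let me first contrast them, and then flag what I believe is a real gap.

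The paper's proof has the following structure: (i) change measure from $\P_{n,r_n}$ to $\P_{n,\ro_k}$ with a quantitative bound on the log Radon--Nikodym derivative (Lemma~\ref{lemma:blockapprox}, the main new technical contribution, which carefully tracks the over-counted region $A_n$ near block boundaries using only the $L^1$ hypothesis); (ii) apply Szemer\'edi's regularity lemma to reduce to a finite set $\W(\e)$ of target graphons; (iii) apply Dhara--Sen's relabelling lemma (Lemma~\ref{lemma:relabelling}) to reduce the $n!$ vertex permutations to a finite set $\T$; (iv) invoke the upper bound of the LDP in the \emph{weak} topology from \cite[Theorem 5.1]{Chatterjeebook}; and (v) let $k\to\infty$ using Lemma~\ref{lemma:ratecontinuousreference}. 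You propose to replace all of this with a direct method-of-types count over block types on a fixed $k\times k$ grid plus a per-block entropy bound. If this worked, it would be more elementary and would also avoid the need for the change-of-measure Lemma~\ref{lemma:blockapprox}.

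However, there is a genuine gap in the type-counting step. You claim that the event $\{\widetilde{h}^{G_n}\in\widetilde{B}(\f,\eta)\}$ is contained in the union, over types $\tau$ with $\des(h_\tau,f)$ small, of $\{\tau(G_n)=\tau\}$, where $\tau(G_n)$ is the block type of $G_n$ in the \emph{standard} $k\times k$ grid. This inclusion is false. Concretely, take $f$ to be the bipartite graphon (equal to $1$ on $[0,\tfrac12]\times[\tfrac12,1]\cup[\tfrac12,1]\times[0,\tfrac12]$ and $0$ elsewhere), $k=2$, and let $G_n$ be the complete bipartite graph between $\{1,\dots,\tfrac n4\}\cup\{\tfrac n2+1,\dots,\tfrac{3n}{4}\}$ and its complement. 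Then $\widetilde{h}^{G_n}=\widetilde{f}$, so the event occurs, but every $2\times2$ block of $G_n$ in the standard labelling has edge density $\tfrac12$, so $h_{\tau(G_n)}\equiv\tfrac12$, which is not close to $f$ in $\des$. Thus the union bound misses configurations. The underlying issue is that $\des$ involves an infimum over relabellings, the block type in a fixed grid is not a relabelling-invariant quantity, and $\P_{n,r_n}$ is not invariant under relabelling (precisely because $r_n$ is inhomogeneous). Handling this correctly is the entire point of the Szemer\'edi step (ii) and the relabelling Lemma~\ref{lemma:relabelling}, which you subsume into ``the factor $4$ absorbing the slack from the measure-preserving relabelling''. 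That factor handles metric slack, not the combinatorial mismatch between block types and equivalence classes. Without a substitute for (ii)--(iii) the argument does not close.

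A secondary point: the bound $\binom{N}{m}q^m(1-q)^{N-m}\le e^{-N\Rs(m/N\mid q)}$ assumes i.i.d.\ edges within a block, whereas under $\P_{n,r_n}$ the edges inside a fixed $k\times k$ block have \emph{varying} probabilities $r_{n,uv}$. You would need the Chernoff--Hoeffding bound for sums of independent but non-identically distributed Bernoullis, with the block-averaged probability $\bar{q}_{\alpha\beta}$, to get a bound in terms of $I_{(r_n)_k}(h_\tau)$. This is true and standard, but it should be stated; as written the step looks like a misapplication of the homogeneous binomial formula.
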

The proof is done via the \textit{level-$n$ approximants} $(\overline{r}_n)_{n\in\N}$ of $r$, which are of the form \eqref{eq:blockgraphon}, with  
\begin{equation}
    \ro_{n,ij}:=n^2\int_{B(i,j,n)} r(x,y)\dx\dy.
\end{equation}
Dhara and Sen show that the distributions $\frac{1}{n^2}\log\P_{n,r_n}$ are well-approximated by $\frac{1}{n^2}\P_{n,\overline{r}_k}$ for $n$ large enough and some fixed $k$ and that the rate function $I_r$ is well-approximated by $I_{\overline{r}_k}$ for $k$ large enough. In the case that $r$ is bounded away from 0 and 1, Dhara and Sen use Lipschitz continuity of the logarithm on a closed interval. If $r$ tends to 0, $\log\P_{n,r_n}$ and $\log\P_{n,\overline{r}_k}$ might differ by large amounts as $n\rightarrow\infty$. Thus, we require more control over the approximation in this paper. We obtain this by precisely counting the points in the unit square where $\log r_n$ and $\log\overline{r}_k$ are far apart and showing that this area tends to 0 sufficiently fast. The proof is given in Section \ref{section:fixedblockgraphon}. In Section \ref{section:blockapprox}, we show that the rate function induced by the level-$n$ approximants approximates the rate function induced by $r$ well. In Section \ref{ss:upperboundproof}, we finish the proof of Theorem \ref{th:upperbound}. This part of the proof does not require $r$ to be bounded away from 0 and 1. Hence, this section does not contain any original content, but we still include it for the sake of completeness.


\subsection{Block graphon approximants}\label{section:blockapprox}
By \cite[Proposition 2.6]{Chatterjeebook}, the level-$n$ approximants converge to $r$ in $L^1$-norm, and convergence almost everywhere follows from the Lebesgue differentiation theorem for sets of bounded eccentricity \cite[Chapter 3, Corollary 1.7]{stein2005real}. The following lemma shows that the level-$n$ approximants satisfy \eqref{eq:asssequence}. The second lemma is a generalization of \cite[Lemma 2.3]{dhara2019large}.

\begin{lemma}\label{lemma:logconvergenceapprox}
$\|\log\ro_n-\log r\|_{L^1}\rightarrow0$ and $\|\log(1-\ro_n)-\log(1-r)\|_{L^1}\rightarrow0$ as $n\rightarrow\infty$.
\end{lemma}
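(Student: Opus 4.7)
The natural approach is a Jensen–Scheff\'e argument exploiting that $\ro_n$ is the block-conditional expectation $\E[r\mid\mathcal{B}_n]$ of $r$, where $\mathcal{B}_n$ is the $\sigma$-algebra generated by the blocks $B(i,j,n)$. Set $f:=-\log r\geq 0$ and $f_n:=-\log\ro_n\geq 0$, and similarly $g:=-\log(1-r)$, $g_n:=-\log(1-\ro_n)$; by hypothesis \eqref{eq:assreference}, $f,g\in L^1([0,1]^2)$. Note that $\E[1-r\mid\mathcal{B}_n]=1-\ro_n$, so the argument for $g_n\to g$ is identical, and I focus on $f_n\to f$.

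First, by the Lebesgue differentiation theorem for sets of bounded eccentricity cited earlier, $\ro_n\to r$ Lebesgue-almost everywhere; since $r>0$ a.e., continuity of $\log$ gives $f_n\to f$ a.e. Next, since $-\log$ is convex, Jensen's inequality applied on each block yields
\begin{equation}
f_n=-\log\bigl(\E[r\mid\mathcal{B}_n]\bigr)\leq \E[-\log r\mid\mathcal{B}_n]=\E[f\mid\mathcal{B}_n].
\end{equation}
Integrating over $[0,1]^2$ and using that conditional expectation preserves total mass,
\begin{equation}
\int_{[0,1]^2} f_n\dx\dy\leq\int_{[0,1]^2}\E[f\mid\mathcal{B}_n]\dx\dy=\int_{[0,1]^2} f\dx\dy.
\end{equation}
Combined with Fatou's lemma ($f_n\geq 0$, $f_n\to f$ a.e.), this forces $\int f_n\to\int f$.

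Having $f_n\to f$ a.e., $f_n,f\geq 0$, and $\int f_n\to\int f<\infty$, Scheff\'e's lemma yields $f_n\to f$ in $L^1$, which is the first claim. The identical argument applied to $1-r$ gives the second claim. The main conceptual point—really the only non-bookkeeping step—is noticing the Jensen bound in the convex direction, which sandwiches $\int f_n$ between $\int f$ (from above, via Jensen) and $\liminf\int f_n\geq\int f$ (from below, via Fatou); without it one would need a dominating function for $-\log\ro_n$ that \eqref{eq:assreference} alone does not supply. I do not foresee any other obstacle, as the a.e.\ convergence is immediate from the Lebesgue differentiation theorem already invoked at the start of Section~\ref{section:blockapprox}.
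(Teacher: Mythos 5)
Your argument is correct and rests on the same key observation as the paper's: $\ro_n=\E[r\mid\mathcal{B}_n]$, so blockwise Jensen gives $|\log\ro_n|\leq\E[-\log r\mid\mathcal{B}_n]$, whose total integral is the fixed constant $\|\log r\|_{L^1}$. Where you part ways is in the closing step. The paper bounds $|\log\ro_n|$ by $\E[-\log r\mid\mathcal{B}_n]$ and then invokes ``the dominated convergence theorem''; strictly speaking that dominating function depends on $n$, so what is really being used is the generalized DCT (Pratt's lemma), with the dominators converging a.e.\ to $-\log r$ while keeping constant integral. Your route instead squeezes $\int f_n$ between $\int f$ (Jensen from above) and $\liminf\int f_n\geq\int f$ (Fatou from below), forcing $\int f_n\to\int f$, and then closes with Scheff\'e's lemma. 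That is logically airtight and arguably the cleaner way to package the same two ingredients (Jensen $+$ a.e.\ convergence), since it avoids the moving-dominator subtlety. Both proofs handle the $1-r$ statement by the identical observation $\E[1-r\mid\mathcal{B}_n]=1-\ro_n$. No gap in your argument.
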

\begin{proof}
First, note that, for $(x,y)\in B(i,j,n)$,
\begin{equation}
    \begin{split}
        |\log\ro_n(x,y)|=&-\log n^2\int_{B(i,j,n)}r(u,v)\,du\,dv\leq n^2\int_{B(i,j,n)}-\log r(u,v)\,du\,dv.
    \end{split}
\end{equation}
The equality follows from the fact that $0\leq r\leq 1$, and the inequality follows from Jensen's inequality. This upper bound is integrable, since
\begin{equation}
    \begin{split}
        \sum_{1\leq i,j\leq n}n^2\int_{B(i,j,n)}-\log r(u,v)\,du\,dv=\|\log r\|_{L^1}<\infty.
    \end{split}
\end{equation}
By the dominated convergence theorem and the fact that $\ro_n$ converges to $r$ almost everywhere, $\|\log\ro_n-\log r\|_{L^1}\rightarrow0$. The proof that $\|\log(1-\ro_n)-\log(1-r)\|_{L^1}\rightarrow0$ is completely analogous.
\end{proof}

\begin{lemma}\label{lemma:ratecontinuousreference}
Let $r\in\W$ and $(r_n)_{n\in\N}\subset\W$ that satisfy \eqref{eq:assreference} and \eqref{eq:asssequence}. Then $I_{r_n}(\f)\rightarrow I_r(\f)$ uniformly over $f\in\W$ as $n\rightarrow\infty$.
\end{lemma}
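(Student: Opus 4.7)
The approach is to reduce the claimed uniform convergence to the $L^1$ convergence in hypothesis \eqref{eq:asssequence} by algebraically cancelling the parts of $\Rs(f\mid\cdot)$ that depend nonlinearly on $f$. Writing $\Rs(a\mid b) = a\log a + (1-a)\log(1-a) - a\log b - (1-a)\log(1-b)$, the first two summands do not involve $b$, so pointwise in $(x,y)$,
\begin{equation*}
\Rs(f\mid r) - \Rs(f\mid r_n) = f\,(\log r_n - \log r) + (1-f)\,(\log(1-r_n) - \log(1-r)).
\end{equation*}

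Integrating over $[0,1]^2$ and using the pointwise bounds $0\leq f\leq 1$ and $0\leq 1-f\leq 1$, the triangle inequality gives
\begin{equation*}
|I_r(f) - I_{r_n}(f)| \leq \|\log r_n - \log r\|_{L^1} + \|\log(1-r_n) - \log(1-r)\|_{L^1}.
\end{equation*}
The right-hand side is independent of $f\in\W$ and tends to zero by \eqref{eq:asssequence}, which is precisely uniform convergence over $f\in\W$.

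The only preliminary point is well-definedness: $I_r(f)$ is finite by the integrable majorant \eqref{eq:dctbound2}, and the same bound with $r_n$ in place of $r$ shows $I_{r_n}(f)$ is finite once one knows $\log r_n,\log(1-r_n)\in L^1([0,1]^2)$, which follows from \eqref{eq:assreference} and \eqref{eq:asssequence} via the triangle inequality in $L^1$. There is no substantive obstacle: the entire content of the lemma is the fortunate cancellation of the entropy-type terms $f\log f+(1-f)\log(1-f)$, combined with the trivial estimate $\max(f,1-f)\leq 1$, which converts the $L^1$ closeness of $\log r_n$ and $\log(1-r_n)$ to $\log r$ and $\log(1-r)$ directly into a uniform-in-$f$ bound.
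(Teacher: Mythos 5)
Your proof is correct and follows essentially the same route as the paper: both exploit the cancellation of the $f\log f+(1-f)\log(1-f)$ terms in $\Rs(f\mid r)-\Rs(f\mid r_n)$ and then bound $|I_{r_n}(f)-I_r(f)|$ by $\|\log r_n-\log r\|_{L^1}+\|\log(1-r_n)-\log(1-r)\|_{L^1}$ using $0\le f,1-f\le 1$, which is uniform in $f$ and tends to $0$ by \eqref{eq:asssequence}. Your added remarks on finiteness of $I_r(f)$ and $I_{r_n}(f)$ are sound but not a substantive departure from the paper's argument.
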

\begin{proof}
First note that $\|\log r_n-\log r\|_{L^1}\rightarrow0$ and $\|\log(1-r_n)-\log(1-r)\|_{L^1}\rightarrow0$ by Lemma \ref{lemma:logconvergenceapprox}. Furthermore, for all $f\in \W$,
\begin{equation}
    \begin{split}
        |I_{r_n}(f)-I_r(f)|&=\left|\int_{[0,1]^2}\left(f(x,y)\log\frac{r_n(x,y)}{r(x,y)}+(1-f(x,y))\log\frac{1-r_n(x,y)}{1-r(x,y)}\right)\dx\dy\right|\\
        &\leq\|\log r_n-\log r\|_{L^1}+\|\log(1-r_n)-\log(1-r)\|_{L^1}.
    \end{split}
\end{equation}
Hence, by the definition of the rate function on $\Wt$,
\begin{equation}
    \begin{split}
        \left|J_{r_n}(\f)-J_r(\f)\right|&=\left|\inf_{\phi\in\M}I_{r_n}(f^\phi)-\inf_{\phi\in\M}I_r(f^\phi)\right|\leq\sup_{\phi\in\M}\left|I_{r_n}(f^\phi)-I_r(f^\phi)\right|\\
        &\leq\|\log r_n-\log r\|_{L^1}+\|\log(1-r_n)-\log(1-r)\|_{L^1}.
    \end{split}
\end{equation}
Since the bound is uniform over all $\f\in\Wt$, we obtain the desired result.
\end{proof}

\subsection{Approximation by a fixed block graphon}\label{section:fixedblockgraphon}
The following lemma is a generalization of \cite[Lemma 3.2]{dhara2019large}.
\begin{lemma}\label{lemma:blockapprox}
Let $r\in\W$ and $(r_n)_{n\in\N}\subset\W$ that satisfy \eqref{eq:assreference} and \eqref{eq:asssequence}. For all $\varepsilon>0$ sufficiently small there exist $N_0=N_0(\varepsilon)$ and $N_1=N_1(\varepsilon)$ such that for all $n\geq N_1\geq k\geq N_0$, $f\in\W$ and $\eta>0$,
\begin{equation}
    \left|\frac{1}{n^2}\log\P_{n,r_n}(B(\f,\eta))-\frac{1}{n^2}\log\P_{n,\ro_{k}}(B(\f,\eta))\right|<\varepsilon,
\end{equation}
where $\ro_k$ is the level-$k$ approximant of $r$ as defined in Section \ref{section:blockapprox}.
\end{lemma}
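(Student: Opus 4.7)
The plan is to bound the Radon-Nikodym derivative $d\P_{n,r_n}/d\P_{n,\ro_k}$ uniformly on the space of simple graphs on $[n]$. For any such graph $G$ with edge indicators $(e_{ij})$,
\begin{equation*}
\log\frac{d\P_{n,r_n}}{d\P_{n,\ro_k}}(G)=\sum_{i<j}\left[e_{ij}\log\frac{r_n(\frac{i-1}{n},\frac{j-1}{n})}{\ro_k(\frac{i-1}{n},\frac{j-1}{n})}+(1-e_{ij})\log\frac{1-r_n(\frac{i-1}{n},\frac{j-1}{n})}{1-\ro_k(\frac{i-1}{n},\frac{j-1}{n})}\right].
\end{equation*}
Taking absolute values over the worst case in $(e_{ij})$, this is dominated by the deterministic quantity
\begin{equation*}
L_{n,k}:=\sum_{i<j}\left(|\log r_n-\log\ro_k|+|\log(1-r_n)-\log(1-\ro_k)|\right)\!\left(\tfrac{i-1}{n},\tfrac{j-1}{n}\right),
\end{equation*}
so that $e^{-L_{n,k}}\P_{n,\ro_k}(B)\leq\P_{n,r_n}(B)\leq e^{L_{n,k}}\P_{n,\ro_k}(B)$ for every Borel $B\subset\W$, in particular $B=B(\f,\eta)$; the lemma thus reduces to showing $L_{n,k}<\varepsilon n^2$ for all $n\geq N_1\geq k\geq N_0$.

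The next step is to compare the Riemann-type sum $L_{n,k}/n^2$ with $\norm{\log r_n-\log\ro_k}_{L^1}$ and $\norm{\log(1-r_n)-\log(1-\ro_k)}_{L^1}$. Split the pairs $(i,j)$ into \emph{good} pairs, for which $B(i,j,n)$ is contained in a single $k$-block, and \emph{bad} pairs whose $n$-block straddles a $k$-block boundary. On a good pair both $r_n$ and $\ro_k$ are constant on $B(i,j,n)$, so
\begin{equation*}
\frac{1}{n^2}|\log r_n-\log\ro_k|\!\left(\tfrac{i-1}{n},\tfrac{j-1}{n}\right)=\int_{B(i,j,n)}|\log r_n-\log\ro_k|\dx\dy,
\end{equation*}
and summing over good pairs yields at most $\norm{\log r_n-\log\ro_k}_{L^1}\leq\norm{\log r_n-\log r}_{L^1}+\norm{\log r-\log\ro_k}_{L^1}$, which by \eqref{eq:asssequence} and Lemma \ref{lemma:logconvergenceapprox} is arbitrarily small once $n$ and $k$ are large; the same argument handles $|\log(1-r_n)-\log(1-\ro_k)|$.

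The bad pairs are the crux. Their total Lebesgue measure $|A_{n,k}|$ is at most $2k/n$. Bounding $|\log r_n-\log\ro_k|\leq|\log r_n|+|\log\ro_k|$, the $|\log r_n|$-contribution equals $\int_{A_{n,k}}|\log r_n|\dx\dy$ because $r_n$ is constant on each $n$-block, and this vanishes uniformly in $n$ as $|A_{n,k}|\to 0$: since $\log r_n\to\log r$ in $L^1$, the family $\{\log r_n\}$ is uniformly integrable by the Vitali convergence theorem, so integrals over sets of vanishing Lebesgue measure vanish uniformly. For the $|\log\ro_k|$-contribution, each $k$-block $B(i',j',k)$ contains at most $2n/k$ bad $n$-corners, each weighted by $|\log\ro_{k,i'j'}|/n^2$, so the total is at most
\begin{equation*}
\frac{1}{n^2}\cdot\frac{2n}{k}\sum_{i',j'=1}^{k}|\log\ro_{k,i'j'}|=\frac{2k}{n}\norm{\log\ro_k}_{L^1}\leq\frac{2k}{n}\norm{\log r}_{L^1},
\end{equation*}
where the last inequality follows from Jensen applied to the convex function $-\log$. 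The analogous bounds hold for $|\log(1-r_n)-\log(1-\ro_k)|$, using uniform integrability of $\{\log(1-r_n)\}$. Choosing first $N_0$ so that $\norm{\log r-\log\ro_k}_{L^1}$ and $\norm{\log(1-r)-\log(1-\ro_k)}_{L^1}$ are below $\varepsilon/8$ for $k\geq N_0$, and then $N_1\geq N_0$ so that for $n\geq N_1$ the $r_n$-approximation errors and the bad-region contributions are also below $\varepsilon/8$, yields $L_{n,k}<\varepsilon n^2$.

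The main obstacle is precisely this bad-block estimate. In the setting of Dhara and Sen, where $r$ is bounded away from $\{0,1\}$, $|\log r_n-\log\ro_k|$ is uniformly bounded on the bad region and multiplying by its area $O(k/n)$ suffices. Here $\log r_n$ may blow up wherever $r$ approaches $\{0,1\}$, so no uniform pointwise bound is available; the Vitali-type uniform integrability of $\{\log r_n\}$ and $\{\log(1-r_n)\}$ combined with the elementary counting of $n$-block corners per $k$-block is exactly the "precise counting" alluded to at the start of Section \ref{section:fixedblockgraphon} and is what allows the argument to close under the weaker hypothesis \eqref{eq:assreference}.
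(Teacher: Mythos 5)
Your proof is correct and follows essentially the same route as the paper's: bound the Radon--Nikodym derivative $d\P_{n,r_n}/d\P_{n,\ro_k}$, split the $n$-blocks into those contained in a single $k$-block (controlled via the $L^1$-bounds from Lemma \ref{lemma:logconvergenceapprox}) and those straddling a $k$-block boundary, and show that the contribution of the latter set, of measure $O(k/n)$, vanishes as $n\to\infty$. The only cosmetic difference is in the bad-block estimate, where the paper bounds the contribution by $k^2\|\log r_n-\log\ro_k\|_{L^1(A_n)}$ and lets $|A_n|\to 0$, while you split into the $|\log r_n|$ part (handled by uniform integrability of $\{\log r_n\}$) and the $|\log\ro_k|$ part (handled by the explicit estimate $\tfrac{2k}{n}\|\log r\|_{L^1}$); both close the argument.
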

\begin{proof}
Let $\varepsilon>0$. Since $\|\log r_n-\log r\|_{L^1},\|\log \ro_n-\log r\|_{L^1}\rightarrow0$ and $\|\log(1-r_n)-\log(1-r)\|_{L^1},\|\log(1-\ro_n)-\log(1-r)\|_{L^1}\rightarrow0$ by Lemma \ref{lemma:logconvergenceapprox}, there exists an $N_0\in\N$ such that 
$\|\log r_n-\log\ro_k\|_{L^1}<\varepsilon/4$ and $\|\log(1-r_n)-\log(1-\ro_k)\|_{L^1}<\varepsilon/4$ for all $n\geq k\geq N_0$.

Note that 
\begin{equation}\label{eq:probabilitiesasintegral}
    \begin{split}
        \P_{n,r_n}(B(\f,\eta))=\int_{B(\f,\eta)}\exp\left(\log\frac{d\P_{n,r_n}}{d\P_{n,\ro_k}}\right)\,d\P_{n,\ro_k}.
    \end{split}
\end{equation}
Fix $n\geq k\geq N_0$, and let $g\in\W$ be of the form \eqref{eq:blockgraphon}. Denote by $r_{n,uv}$ and $g_{uv}$ the values of $r_n$ and $g$ in $B(u,v,n)$ respectively. Then
\begin{equation}\label{eq:logdifferenceestimation}
    \begin{split}
        &\frac{1}{n^2}\left|\log\frac{d\P_{n,r_n}}{d\P_{n,\ro_k}}(g)\right|=\frac{1}{n^2}\left|\sum_{1\leq i\leq j\leq k}\sum_{\substack{u<v\\\left(\frac{u-1}{n},\frac{v-1}{n}\right)\in B(i,j,k)}}\left(g_{uv}\log\frac{r_{n,uv}}{\ro_{k,ij}}+(1-g_{uv})\log\frac{1-r_{n,uv}}{1-\ro_{k,ij}}\right)\right|\\
        \leq&\frac{1}{n^2}\sum_{1\leq i\leq j\leq k}\sum_{\substack{u<v\\\left(\frac{u-1}{n},\frac{v-1}{n}\right)\in B(i,j,k)}}\left(\left|\log\frac{r_{n,uv}}{\ro_{k,ij}}\right|+\left|\log\frac{1-r_{n,uv}}{1-\ro_{k,ij}}\right|\right)\\
        \leq&\frac{1}{n^2}\sum_{1\leq i,j\leq k}\sum_{\substack{1\leq u,v\leq n\\\left(\frac{u-1}{n},\frac{v-1}{n}\right)\in B(i,j,k)}}\left(\left|\log\frac{r_{n,uv}}{\ro_{k,ij}}\right|+\left|\log\frac{1-r_{n,uv}}{1-\ro_{k,ij}}\right|\right)
    \end{split}
\end{equation}
The quantity above closely resembles $\|\log r_n-\log\ro_k\|_{L^1}+\|\log(1-r_n)-\log(1-\ro_k)\|_{L^1}$, except that it `over-counts' part of $[0,1]^2$ and ignores other parts. We will make this precise.

For points $(\frac{u-1}{n},\frac{v-1}{n})\in[0,1]^2$ such that $B(u,v,n)\subset B(i,j,k)$ for $i,j$ with $\left(\frac{u-1}{n},\frac{v-1}{n}\right)\in B(i,j,k)$, we have 
\begin{equation}
    \begin{split}
        \frac{1}{n^2}\left|\log\frac{r_{n,uv}}{\ro_{k,ij}}\right|=\|\log r_n-\log\ro_k\|_{L^1(B(u,v,n))}.
    \end{split}
\end{equation}
Now define 
\begin{equation}
    \begin{split}
        C_{i,j,k}:=\left\{(u,v)\in[0,1]^2\left|\,\left(\frac{u-1}{n},\frac{v-1}{n}\right)\in B(i,j,k),\,B(u,v,n)\not\subset B(i,j,k)\right\}\right.,
    \end{split}
\end{equation}
and
\begin{equation}
    \begin{split}
        A_n:=\bigcup_{1\leq i,j\leq k}\bigcup_{(u,v)\in C_{i,j,k}}(B(u,v,n)\cap B(i,j,k)).
    \end{split}
\end{equation}
The set $C_{i,j,k}$ consists of the right-most and top-most points in the square $B(i,j,k)$, and the set $A_n$ is the part of $[0,1]^2$ which is over-counted in \eqref{eq:logdifferenceestimation}. For $(u,v)\in C_{i,j,k}$,
\begin{equation}
    \begin{split}
        \frac{1}{n^2}\left|\log\frac{r_{n,uv}}{\ro_{k,ij}}\right|=&\frac{1}{n^2\lambda(B(u,v,n)\cap B(i,j,k))}\|\log r_n-\log\ro_k\|_{L^1(B(u,v,n)\cap B(i,j,k))}\\
        \leq&k^2\|\log r_n-\log\ro_k\|_{L^1(B(u,v,n)\cap B(i,j,k))},
    \end{split}
\end{equation}
where we use that $\lambda(B(u,v,n)\cap B(i,j,k))=(\frac{i}{k}-\frac{u-1}{n})(\frac{j}{k}-\frac{v-1}{n})\geq\frac{1}{k  ^2n^2}$. Hence,
\begin{equation}
    \begin{split}
        &\frac{1}{n^2}\sum_{1\leq i\leq j\leq k}\sum_{\substack{u<v\\\left(\frac{u-1}{n},\frac{v-1}{n}\right)\in B(i,j,k)}}\left|\log\frac{r_{n,uv}}{\ro_{k,ij}}\right|\\
        =&\frac{1}{n^2}\sum_{1\leq i\leq j\leq k}\sum_{\substack{u<v\\\left(\frac{u-1}{n},\frac{v-1}{n}\right)\not\in C_{i,j,k}}}\left|\log\frac{r_{n,uv}}{\ro_{k,ij}}\right|+\frac{1}{n^2}\sum_{1\leq i\leq j\leq k}\sum_{\substack{u<v\\\left(\frac{u-1}{n},\frac{v-1}{n}\right)\in C_{i,j,k}}}\left|\log\frac{r_{n,uv}}{\ro_{k,ij}}\right|\\
        \leq&\|\log r_n-\log\ro_k\|_{L^1([0,1]^2\setminus A_n)}+k^2\|\log r_n-\log\ro_k\|_{L^1(A_n)}.
    \end{split}
\end{equation}
Since the second part tends to 0 as $n\rightarrow\infty$, there exists an $N_1=N_1(\varepsilon)\geq k$ such that $k^2\|\log r_n-\log\ro_k\|_{L^1(A_n)}<\varepsilon/4$ for all $n\geq N_1$. The argument for the terms $\left|\log\frac{1-r_{n,uv}}{1-\ro_{k,ij}}\right|$ is completely analogous. Then
\begin{equation}
    \begin{split}
        \frac{1}{n^2}\left|\log\frac{d\P_{n,r_n}}{d\P_{n,\ro_k}}(g)\right|<\varepsilon
    \end{split}
\end{equation}
for all $n\geq N_1\geq k\geq N_0$. Substituting this inequality into \eqref{eq:probabilitiesasintegral}, we obtain
\begin{equation}
    \begin{split}
        \left|\frac{1}{n^2}\log \P_{n,r_n}(B(\f,\eta))-\frac{1}{n^2}\log\P_{n,\ro_k}(B(\f,\eta))\right|<\varepsilon.
    \end{split}
\end{equation}
\end{proof}

\subsection{Proof of the upper bound}\label{ss:upperboundproof}
The rest of the proof now follows as in \cite{dhara2019large}. For the sake of completeness, we repeat their argument in this section using the notation introduced in this paper.

Let $\M_n$ be the set of permutations of $n$ objects, and let $G^{\phi_n}_n$ be the graph obtained by relabelling the vertices of $G_n$ with the permutation $\phi_n\in\M_n$. The following lemma shows that there exists a finite subset $\T\subset\M$ such that, for all $n$ large enough, the distribution of $h^{G_n^{\phi_n}}$ can be approximated by $h^{G_n,\tau}$ for some $\tau\in\mathcal{T}$.
\begin{lemma}\label{lemma:relabelling}
Let $r_k,f\in\W$ be of the form \eqref{eq:blockgraphon} with $k\geq1$. Then, for any $\varepsilon>0$, there exists $n_0=n_0(k,\varepsilon)$ and a finite set $\T=\T(k,\varepsilon)$ such that for all $n\geq n_0$ and $\phi_n\in\M_n$, there exists $\tau\in\T$ satisfying
\begin{equation}
    \P_{n,r_k}(d_{\square}(h^{G_n^{\phi_n}},f)\leq\varepsilon)\leq\P_{n,r_k}(d_{\square}(h^{G_n,\tau},f)\leq2\varepsilon)
\end{equation}
\end{lemma}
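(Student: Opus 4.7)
The plan is to reduce the claim to a deterministic cut-distance bound via the invariance of $d_\square$ under measure-preserving bijections. For a vertex permutation $\sigma\in\M_n$, let $\psi_\sigma\in\M$ denote the piecewise-affine measure-preserving bijection of $[0,1]$ sending each cell $C_\ell:=[\tfrac{\ell-1}{n},\tfrac{\ell}{n})$ onto $C_{\sigma(\ell)}$. A direct check gives $h^{G_n^{\phi_n}}=(h^{G_n})^{\psi_{\phi_n^{-1}}}$, and using $d_\square(h_1^\eta,h_2^\eta)=d_\square(h_1,h_2)$ for $\eta\in\M$ this yields
\[
d_\square(h^{G_n^{\phi_n}},f)=d_\square(h^{G_n},f^{\psi_{\phi_n}}),\qquad d_\square(h^{G_n,\tau},f)=d_\square(h^{G_n},f^{\tau^{-1}}).
\]
So it suffices to build a finite $\T\subset\M$ such that for every $\phi_n\in\M_n$ some $\tau\in\T$ satisfies $d_\square(f^{\psi_{\phi_n}},f^{\tau^{-1}})\leq\varepsilon$: the triangle inequality then yields the event inclusion $\{d_\square(h^{G_n^{\phi_n}},f)\leq\varepsilon\}\subset\{d_\square(h^{G_n,\tau},f)\leq 2\varepsilon\}$, which is stronger than the claimed probability bound.

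Because $f$ is constant on macro-blocks $I_a\times I_b$ with $I_a:=[\tfrac{a-1}{k},\tfrac{a}{k})$, the graphon $f^\rho$ depends on $\rho\in\M$ only through the coloring $\beta_\rho(x):=a$ iff $\rho(x)\in I_a$, via $f^\rho(x,y)=f_{\beta_\rho(x),\beta_\rho(y)}$. Attach to each $\rho$ the profile matrix $Q(\rho):=\bigl(|\rho^{-1}(I_a)\cap I_b|\bigr)_{a,b\in[k]}$; these lie in the compact polytope $\Lambda_k\subset\R^{k^2}$ of nonnegative matrices with all row and column sums equal to $1/k$. Fix $\delta:=\varepsilon/k^2$ and pick a finite $\delta$-net $\mathcal{P}\subset\Lambda_k$ in the $\ell_\infty$-norm. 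For each $P\in\mathcal{P}$, define a canonical $\tau_P\in\M$ with $Q(\tau_P^{-1})=P$ by partitioning every $I_b$ into $k$ consecutive sub-intervals of lengths $P_{1b},\ldots,P_{kb}$ and mapping them affinely onto correspondingly-placed sub-intervals of the $I_a$'s. Set $\T:=\{\tau_P:P\in\mathcal{P}\}$, finite and depending only on $(k,\varepsilon)$.

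Given $\phi_n\in\M_n$, for $n\geq n_0(k,\varepsilon)$ large enough that the macro-block rounding is negligible, choose $P\in\mathcal{P}$ with $\|Q(\psi_{\phi_n})-P\|_\infty\leq\delta$ and set $\tau:=\tau_P$. Within each macro-block $I_b$, the colorings $\beta_{\psi_{\phi_n}}$ and $\beta_{\tau^{-1}}$ assign each value $a\in[k]$ to sets whose (discretised) sizes agree up to $n\delta$. Hence a permutation $\sigma\in\M_n$ that preserves the macro-block partition of $[n]$ can be chosen so that $\beta_{\psi_{\phi_n\sigma^{-1}}}$ and $\beta_{\tau^{-1}}$ agree off a set of measure at most $\tfrac{1}{2}k^2\delta$; combining $d_\square\leq\|\cdot\|_{L^1}$ with a union bound in the two graphon arguments gives $d_\square(f^{\psi_{\phi_n\sigma^{-1}}},f^{\tau^{-1}})\leq k^2\delta=\varepsilon$. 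Because $r_k$ is constant on macro-blocks, $\P_{n,r_k}$ is invariant under the action of all such macro-block preserving $\sigma$, so $d_\square(h^{G_n},f^{\psi_{\phi_n}})$ and $d_\square(h^{G_n},f^{\psi_{\phi_n\sigma^{-1}}})$ have identical distributions. The probability bound follows by combining this with the reformulation of the first paragraph and the triangle inequality.

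The main obstacle is the matching step in the third paragraph: one must choose $\sigma$ to align the within-macro-block label histograms and control the small rounding that arises from the incompatibility between the macro-blocks of $r_k$ (of length $1/k$) and the empirical cells $C_\ell$ (of length $1/n$). Once $n_0$ is taken large enough that this rounding is dominated by $\delta$, both the counting and the construction of $\sigma$ reduce to a straightforward bipartite matching within each macro-block, and all remaining ingredients — the $\mathcal{S}$-invariance of $\P_{n,r_k}$, the finiteness of $\T$, and the triangle inequality — are either standard or immediate.
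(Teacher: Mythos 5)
Your argument is sound and its skeleton is the one used by Dhara and Sen (the paper delegates the proof to their Lemma 3.3 without reproducing it): exploit that $f$ is a $k$-block graphon so $f^\rho$ depends on $\rho$ only through the colouring $\beta_\rho$, cover the polytope of profile matrices $\Lambda_k$ by a finite $\delta$-net to obtain $\T$, and then use that $\P_{n,r_k}$ is invariant under vertex permutations that preserve the macro-block partition to re-align, within each macro-block, the positions of the colour classes. That last step is precisely where the lemma needs the probability statement rather than a deterministic inclusion, and you correctly isolate it as the crux.

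Two small repairs are needed. First, the ``it suffices to build a finite $\T$ with $d_\square(f^{\psi_{\phi_n}},f^{\tau^{-1}})\leq\varepsilon$'' reduction at the end of your first paragraph is a red herring: no finite $\T$ independent of $n$ can achieve this, because for a fixed profile the pieces of the colouring can sit in arbitrary positions inside each macro-block, and the raw $d_\square$ metric is not insensitive to such repositioning; the set $\{f^{\psi_{\phi_n}}:n,\phi_n\}$ is not totally bounded in $d_\square$. What you actually prove, and what the lemma needs, is the weaker statement with the auxiliary macro-block-preserving $\sigma$ together with the $\P_{n,r_k}$-invariance, so the paragraph-one framing should be rewritten to match paragraph three. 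Second, the quantitative bound is off by a factor of $k$: within a single macro-block $I_b$ the $a$-th cumulative boundary of the sorted step functions can be off by as much as $a\delta$, so the disagreement in $I_b$ is of order $k^2\delta$ rather than $k\delta$, and summing over the $k$ macro-blocks gives $O(k^3\delta)$ in total, not $\tfrac12 k^2\delta$. Taking $\delta=\varepsilon/(Ck^3)$ for a suitable absolute constant $C$ (and $n_0$ large enough that the $O(k^2/n)$ cell-boundary rounding is dominated) fixes this and the rest of the argument goes through unchanged.
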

\begin{proof}
See \cite[Lemma 3.3]{dhara2019large}
\end{proof}

We are now ready to prove Theorem \ref{th:upperbound}. 

\begin{proof}[Proof of Theorem \ref{th:upperbound}]
Fix $\varepsilon>0$ and $\f\in\Wt$. Recall the setup of Theorem \ref{th:upperbound}. Because of Lemma \ref{lemma:blockapprox}, it suffices to prove that there exists an $\eta(\varepsilon)>0$ such that, for all $0<\eta<\eta(\varepsilon)$,
\begin{equation}
    \limsup_{n\rightarrow\infty}\frac{2}{n^2}\log\widetilde{\P}_{n,\ro_k}(\widetilde{B}(\widetilde{f},\eta))=\limsup_{n\rightarrow\infty}\frac{2}{n^2}\log\P_{n,\ro_k}(B(\widetilde{f},\eta))\leq-\inf_{g\in B(\widetilde{f},4\varepsilon)}I_r(g),
\end{equation}
where $k$ is chosen to be sufficiently large according to Lemma \ref{lemma:blockapprox}. Next, we apply a version of Szem\'eredi's regularity lemma, as formulated in \cite[Theorem 3.1]{Chatterjeebook}. This states that, for every $\varepsilon>0$, there exists a finite set $\W(\varepsilon)\subset\W$ such that, for every finite simple graph $G_n$ on $n$ vertices, there exist $\phi_n\in\M_n$ and $h\in\W(\varepsilon)$ such that $h^{G^{\phi_n}_n}\in B(h,\varepsilon)$.

Let $G_n$ be a graph drawn according to $\P_{n,\ro_k}$. Define
\begin{equation}
    B(\W(\varepsilon),\varepsilon):=\{g\in\W\colon\min_{h\in\W(\varepsilon)}d_\square(g,h)\leq\varepsilon\}.
\end{equation}
Then, by the result above,
\begin{equation}
    \begin{split}
        \{h^{G_n}\in B(\widetilde{f},\eta)\}&=\{h^{G_n}\in B(\widetilde{f},\eta)\}\bigcap\left(\bigcup_{\phi_n\in\M_n}\{h^{G^{\phi_n}_n}\in B(\W(\varepsilon),\varepsilon)\}\right)\\
        &=\bigcup_{h\in\W(\varepsilon)}\bigcup_{\phi_n\in\M_n}\{h^{G_n}\in B(\widetilde{f},\varepsilon)\}\cap\{h^{G^{\phi_n}_n}\in B(h,\varepsilon)\}.
    \end{split}
\end{equation}
Since $\W(\varepsilon)$ is a finite set, it suffices to show that
\begin{equation}\label{eq:3.19dhara}
    \limsup_{n\rightarrow\infty}\frac{2}{n^2}\log\P_{n,\ro_k}\left(\bigcup_{\phi_n\in\M_n}\{h^{G_n}\in B(\widetilde{f},\eta)\}\cap\{h^{G^{\phi_n}_n}\in B(h,\varepsilon)\}\right)\leq-\inf_{g\in B(\widetilde{f},4\varepsilon)}I_r(g)
\end{equation}
for all $h\in\W(\varepsilon)$ and $\eta<\varepsilon$. Lemma \ref{lemma:relabelling} yields that the left-hand side of \eqref{eq:3.19dhara} is at most
\begin{equation}
    \begin{split}
        &\limsup_{n\rightarrow\infty}\frac{2}{n^2}\log\P_{n,\ro_k}\left(\bigcup_{\phi_n\in\M_n}\{h^{G_n^{\phi_n}}\in B(h,\varepsilon)\}\right)\\
        &\leq\limsup_{n\rightarrow\infty}\frac{2}{n^2}\max_{\phi_n\in\M_n}\log n!\,\P_{n,\ro_k}\left(h^{G_n^{\phi_n}}\in B(h,\varepsilon)\right)\\
        &\leq\limsup_{n\rightarrow\infty}\frac{2}{n^2}\max_{\tau\in\T}\log\P_{n,\ro_k}\left(h^{G_n^{\phi_n}}\in B(h,2\varepsilon)\right),
    \end{split}
\end{equation}
where we use that $|\M_n|=n!$ and $\log n!=o(n^2)$. Since $\T$ is a finite set, it is enough to show that for each $\tau\in\T$,
\begin{equation}
    \begin{split}
        \limsup_{n\rightarrow\infty}\frac{2}{n^2}\log\P'_{n,\ro_k}\left(h^{G_n,\tau}\in B(h,2\varepsilon)\right)&=\limsup_{n\rightarrow\infty}\frac{2}{n^2}\log\P_{n,\ro_k}\left(h^{G_n}\in B(h^{\tau^{-1}},2\varepsilon)\right)\\
    &\leq-\inf_{g\in B(\widetilde{f},4\varepsilon)}I_r(g).
    \end{split}
\end{equation}
By \cite[Lemma 5.4]{Chatterjeebook}, $B(h_{\tau^{-1}},2\varepsilon)$ is closed in the weak topology. Hence, we can apply the LDP upper bound in the weak topology, stated in \cite[Theorem 5.1]{Chatterjeebook}. Although that the upper bound in the weak topology was proved only for the homogeneous ERRG, the argument generalises to the inhomogeneous ERRG model from this paper. We refer to \cite[Section 4]{Markeringthesis} for more detail. Thus,
\begin{equation}
    \begin{split}
        \limsup_{n\rightarrow\infty}\frac{2}{n^2}\log\P'_{n,\ro_k}\left(h^{G_n}\in B(h^{\tau^{-1}},2\varepsilon)\right)\leq-\inf_{g\in B(h^{\tau^{-1}},2\varepsilon)}I_{\ro_k}(g)\leq-\inf_{g\in B(\widetilde{h},2\varepsilon)}I_{\ro_k}(g).
    \end{split}
\end{equation}
If the event in \eqref{eq:3.19dhara} is empty, then the bound is trivial. In order for the event to be non-empty, we must have that $\ds(\widetilde{h^{G_n}},\widetilde{f})\leq\eta<\varepsilon$ and $\ds(\widetilde{h^{G_n}},\widetilde{h})\leq\varepsilon$, so that $\ds(\widetilde{f},\widetilde{h})\leq2\varepsilon$. Hence, $B(\h,2\varepsilon)\subset B(\f,4\varepsilon)$ and we obtain \eqref{eq:3.19dhara}. The proof is finished by letting $k\rightarrow\infty$ and applying Lemma \ref{lemma:ratecontinuousreference}.
\end{proof}

\section{The rate function for the largest eigenvalue}\label{section:largesteigenvalue}
We sketch the proof of Theorem \ref{th:scalingeigenvalue} as given by Chakrabarty et al. \cite{chakrabarty2020large}. We only give details for the Lemmas \ref{lemma:costsmallperturbation} and \ref{lemma:eigenvaluerateblockapprox}, which are generalizations of results in \cite{chakrabarty2020large}.

Note that when $\beta=C_r$, the infimum in \eqref{eq:rateeigenvalue} is attained at $h=r$ and $\psi_r(C_r)=0$. Take $\beta=C_r+\varepsilon$ with $\varepsilon>0$ small, and assume that the infimum is attained by a graphon of the form $h=r+\Delta_\varepsilon$, where $\Delta_\varepsilon\colon[0,1]^2\to\R$ represents a perturbation of the graphon $r$. 

The proof of Theorem \ref{th:scalingeigenvalue} consists of showing that it suffices to consider $\Delta_\varepsilon$ of the form $\varepsilon\Delta$, identifying the optimal $\Delta$ and calculating $\psi_r$. Chakrabarty et al. \cite{chakrabarty2020large} first prove that $I_r(r+\D_\e)\geq2\e^2$ for any perturbation $\Delta_\e$. The following lemma is an adaptation of \cite[Lemma 3.1 and Lemma 3.2]{chakrabarty2020large} and shows that $I_r(r+\Delta_\varepsilon)$ is of order $\e^2$ in the case $\D_\e=\e\D$.
\begin{lemma}\label{lemma:costsmallperturbation}
If $\Delta_\varepsilon=\varepsilon^\a\Delta$ on some measurable set $B\subset[0,1]^2$, with $\e,\a>0$ and $\D\colon[0,1]^2\to\R$, then the contribution of $B$ to the cost $I_r(r+\Delta_\e)$ is
\begin{equation}
    \int_B\Rs(r(x,y)+\e^\a\D(x,y)\mid r(x,y))=(1+o(1))\frac{1}{2}\varepsilon^{2\alpha}\sint\frac{\Delta^2}{r(1-r)}.
\end{equation}
\end{lemma}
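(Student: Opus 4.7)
The natural strategy is a second-order Taylor expansion of $\Rs(\,\cdot\mid r)$ about $r$, followed by dominated convergence. A direct calculation shows that $\Rs(r\mid r)=0$, that $\partial_a \Rs(a\mid r)\big|_{a=r}=0$ (since $a=r$ is the unique minimizer), and that $\partial_a^2 \Rs(a\mid r)=1/[a(1-a)]$. Taylor's theorem with integral remainder therefore gives
\[
\Rs(r + t \mid r)=\frac{t^2}{2r(1-r)}+\rho(r,t),\qquad \rho(r,t)=O\!\left(\frac{|t|^3}{[r(1-r)]^2}\right),
\]
uniformly while $|t|\leq\tfrac12\min(r,1-r)$. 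Substituting $t=\e^\a\D(x,y)$ and dividing by $\e^{2\a}$ yields the pointwise limit
\[
\e^{-2\a}\,\Rs(r+\e^\a\D\mid r)\;\xrightarrow[\e\to 0]{}\;\frac{\D^2}{2\,r(1-r)}\qquad\text{a.e.\ on }B.
\]

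The claim then reduces to integrating this pointwise limit, so the main work lies in justifying the exchange of limit and integral. I would split $B$ into a bulk set $B^\e_{\mathrm{bulk}}:=\{(x,y)\in B:\e^\b\leq r(x,y)\leq 1-\e^\b\}$ for a well-chosen $\b\in(0,\a)$, and a peripheral set $B\setminus B^\e_{\mathrm{bulk}}$. On the bulk, the Taylor remainder is uniformly controlled and the integrand is dominated by $C\,\D^2/[r(1-r)]$, which is integrable under the hypothesis that the right-hand side of the lemma is finite. On the periphery, the integrand is bounded crudely by $(2/e+|\log r|+|\log(1-r)|)/\e^{2\a}$ from the estimate in the proof of Lemma \ref{lemma:ratecontinuousL2}, while the Lebesgue measure of $\{r<\e^\b\}\cup\{r>1-\e^\b\}$ shrinks to zero by Markov's inequality combined with $\log r,\log(1-r)\in L^1([0,1]^2)$.

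The main technical obstacle is this peripheral estimate: because $r$ may approach $\{0,1\}$ on a set of positive measure, the Taylor expansion fails there and the limiting coefficient $1/[r(1-r)]$ can be large. Regularity of $\D$ relative to $r$ on this region is essential; in the concrete application to Theorem \ref{th:scalingeigenvalue}, where $\D=(C_r/B_r)\,r^2(1-r)$, both $\D/r$ and $\D/(1-r)$ are bounded, so the peripheral contribution is easy to bound by $o(\e^{2\a})$. Combining the bulk approximation with this vanishing peripheral contribution and sending $\e\to 0$ produces the asserted $(1+o(1))$ asymptotic.
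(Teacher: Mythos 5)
Your core idea --- Taylor expanding $\Rs(\cdot\mid r)$ about the base point $r$, with $\chi''(r)=1/[r(1-r)]$ supplying the leading term --- is exactly the paper's. The difference is in the error analysis: the paper expands $\chi$ into its full power series $\sum_n\frac{1}{n!}\chi^{(n)}(r)(\e^\a\D)^n$ and interchanges sum and integral, citing the uniform bound $|\Rs(f\mid r)|\leq\frac{2}{e}+|\log r|+|\log(1-r)|$ together with $\log r,\log(1-r)\in L^1$, while you truncate at second order, control the remainder with a Taylor estimate, and split $B$ into a bulk (where $r$ stays away from $\{0,1\}$) and a shrinking periphery handled by Markov's inequality.

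Your worry about the peripheral set is a real one, and it applies to the paper's proof just as much as to yours. For $n\ge2$ one has $\chi^{(n)}(r)=(n-2)!\bigl[(-1)^n r^{-(n-1)}+(1-r)^{-(n-1)}\bigr]$, so the $n$-th term of the paper's interchanged series involves $\int_B(\D/r)^n\,r\dx\dy$ and $\int_B(\D/(1-r))^n(1-r)\dx\dy$, and the radius of convergence of the expansion of $\chi$ about $r$ is $\min(r,1-r)$. Neither the paper's sum--integral interchange nor your peripheral estimate closes for a fully arbitrary $\D\colon[0,1]^2\to\R$ when $r$ is allowed to approach $\{0,1\}$: some control on $\D/r$ and $\D/(1-r)$ is tacitly assumed. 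In the intended application $\D=(C_r/B_r)r^2(1-r)$, both ratios are bounded and either route goes through. The merit of your bulk/periphery decomposition is that it makes this hidden regularity hypothesis explicit; beyond that, you and the paper are doing essentially the same thing, and the gap you flag is one the paper's proof shares.
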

\begin{proof}
Because $\chi(a):=\Rs(a\mid b)$ is analytic for every $a\in[0,1]$ and $b\in(0,1)$, we have
\begin{equation}
    \begin{split}
        \int_B\Rs(r(x,y)+\e^\a\D(x,y)\mid r(x,y))=&\int_B\sum_{n=0}^\infty\frac{1}{n!}\chi^{(n)}(r)\varepsilon^{\a n}\Delta^n=\sum_{n=0}^\infty\frac{\varepsilon^{\a n}}{n!}\int_{B}\chi^{(n)}(r)\Delta^n,
    \end{split}
\end{equation}
where we can swap the integral and sum due to the fact that $\sint|\Rs(f(x,y)\mid r(x,y))|\dx\dy$ is uniformly bounded over all $f\in\W$, since $\log r,\log(1-r)\in L^1([0,1]^2)$. Furthermore, $\chi(b)=\chi'(b)=0$ and $\chi''(b)=\frac{1}{b(1-b)}$. Hence,
\begin{equation}
    \begin{split}
        \int_B\Rs(r(x,y)+\e^\a\D(x,y)\mid r(x,y))=&\frac{1}{2}\varepsilon^{2\a}\int_{B}\frac{\Delta^2}{r(1-r)}+O(\varepsilon^{3\a})=(1+o(1))\frac{1}{2}\varepsilon^{2\a}\int_{B}\frac{\Delta^2}{r(1-r)}.
    \end{split}
\end{equation}
\end{proof}

From the proof of Lemma \ref{lemma:costsmallperturbation} and \cite[Lemma 3.1]{chakrabarty2020large}, it follows that optimal perturbations with $\D_\e$ must satisfy $\|\D_\e\|_{L^2}\asymp\e$, and hence it is desirable to have $\D_\e=\e\D$. Chakrabarty et al. \cite{chakrabarty2020large} argue through block graphon approximants that this is indeed the case. For this argument, we need the following lemma, which shows that block graphons approximate the rate function well.
\begin{lemma}\label{lemma:eigenvaluerateblockapprox}
Let $\overline{r}_n$ and $\overline{f}_n$ be the level-$n$ approximants of $r$ and $f\in\W$, as defined in Section \ref{section:blockapprox}. Then $\limn I_{\overline{r}_n}(\overline{f}_n)=I_r(f)$.
\end{lemma}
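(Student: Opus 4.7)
The plan is to expand the relative entropy kernel as
\begin{equation*}
\mathcal{R}(a\mid b) = \bigl[a\log a + (1-a)\log(1-a)\bigr] - a\log b - (1-a)\log(1-b)
\end{equation*}
and treat each of the resulting four integrals separately. By the Lebesgue differentiation theorem cited at the start of Section \ref{section:blockapprox}, the level-$n$ approximants satisfy $\overline{f}_n \to f$ and $\overline{r}_n \to r$ Lebesgue-almost everywhere; both sequences take values in $[0,1]$, and by Lemma \ref{lemma:logconvergenceapprox} we have $\log \overline{r}_n\to\log r$ and $\log(1-\overline{r}_n)\to\log(1-r)$ in $L^1([0,1]^2)$.

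For the two entropy terms $\int \overline{f}_n\log\overline{f}_n$ and $\int (1-\overline{f}_n)\log(1-\overline{f}_n)$, continuity of $x\mapsto x\log x$ on $[0,1]$ together with the uniform bound $|x\log x|\le 1/e$ allows one to invoke dominated convergence to conclude that they converge to the corresponding integrals for $f$. For a cross term such as $\int \overline{f}_n\log\overline{r}_n$, I would write
\begin{equation*}
\int \overline{f}_n\log\overline{r}_n - \int f\log r = \int \overline{f}_n\bigl(\log\overline{r}_n - \log r\bigr) + \int (\overline{f}_n - f)\log r.
\end{equation*}
The first summand is bounded in absolute value by $\|\log\overline{r}_n - \log r\|_{L^1}$, which tends to $0$ by Lemma \ref{lemma:logconvergenceapprox}; the second summand tends to $0$ by dominated convergence with majorant $2|\log r|\in L^1$ guaranteed by \eqref{eq:assreference}. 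The cross term $\int (1-\overline{f}_n)\log(1-\overline{r}_n)$ is handled identically, using $\log(1-r)\in L^1$. Summing the four convergences yields the claim.

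The real substance of the argument sits in the cross terms, and the key inputs are Lemma \ref{lemma:logconvergenceapprox} (providing $L^1$-convergence of the logarithms of the reference graphons) together with the integrability assumption \eqref{eq:assreference} (providing a dominating function for the pointwise-convergence piece). No additional regularity of $r$ or $f$ beyond what is already assumed is required; in particular, no boundedness of $r$ away from $0$ or $1$ is used.
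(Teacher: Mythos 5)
Your argument is correct, and it reaches the conclusion by a more elementary, self-contained route than the paper's. The paper simply writes
\begin{equation*}
I_{\overline{r}_n}(\overline{f}_n)-I_r(f) = \bigl(I_{\overline{r}_n}(\overline{f}_n)-I_r(\overline{f}_n)\bigr)+\bigl(I_r(\overline{f}_n)-I_r(f)\bigr)
\end{equation*}
and invokes Lemma \ref{lemma:ratecontinuousreference} (uniform convergence $I_{\overline{r}_n}\to I_r$ on $\W$, which in turn comes from the bound $\|\log \overline{r}_n-\log r\|_{L^1}+\|\log(1-\overline{r}_n)-\log(1-r)\|_{L^1}$) for the first bracket, and Lemma \ref{lemma:ratecontinuousL2} ($L^2$-continuity of $I_r$) together with $\overline{f}_n\to f$ in $L^2$ for the second. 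Your four-term expansion of $\Rs$ carries out the same estimates directly: the two entropy pieces replicate the dominated-convergence step inside Lemma \ref{lemma:ratecontinuousL2}, and your splitting of each cross term into an $L^1$-of-logs part plus a dominated-convergence part with majorant $|\log r|$ (resp.\ $|\log(1-r)|$) recreates, in combination, the inner arguments of both lemmas. The paper's route is shorter because it reuses infrastructure already built; yours is more transparent about exactly where Lemma \ref{lemma:logconvergenceapprox} and the integrability assumption \eqref{eq:assreference} enter, and you correctly observe that no boundedness of $r$ away from $0$ and $1$ is needed. The only step worth making explicit is that the term-by-term splitting of $I_{\overline{r}_n}(\overline{f}_n)$ into four finite integrals is legitimate: for each fixed $n$ the block graphon $\overline{r}_n$ takes finitely many values in $(0,1)$ (since $0<r<1$ a.e.\ forces $0<\overline{r}_{n,ij}<1$), and on the $I_r(f)$ side the bound $|x\log x|\le 1/e$ together with $\log r,\log(1-r)\in L^1$ guarantees each piece is finite.
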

\begin{proof}
Let $\varepsilon>0$. By Lemma \ref{lemma:ratecontinuousreference}, $I_{r_N}(f)\rightarrow I_r(f)$ uniformly over all $f\in\W$. Hence, there exists $M_1=M_1(\varepsilon)$ such that for all $N\geq M_1$, $|I_{r_N}(f_N)-I_{r}(f_N)|<\varepsilon/2$. Furthermore, $f_N\rightarrow f$ in $L^2$ and $I_r$ is continuous in the $L^2$-topology in $\W$ by Lemma \ref{lemma:ratecontinuousL2}. Thus, there exists $M_2=M_2(\varepsilon)$ such that $|I_r(f_N)-I_r(f)|<\varepsilon/2$ for all $N\geq M_2$. Choosing $M:=\max\{M_1,M_2\}$ completes the proof.
\end{proof}

The remainder of the proof now follows as in \cite{chakrabarty2020large}. We give a brief summary. Using Lemma \ref{lemma:costsmallperturbation} and exploiting the property that $r$ is of rank 1, Chakrabarty et al. \cite{chakrabarty2020large} show that, in the case $\Delta_\e=\e\D$,
\begin{equation}
    \psi_r(C_r+\e)=(1+o(1))K_r\e^2,
\end{equation}
with
\begin{equation}\label{eq:Krinfimum}
    K_r=\inf_{\substack{\D\colon[0,1]^2\to\R\\\sint r\D=C_r}}\sint\frac{\D^2}{2r(1-r)}.
\end{equation}
Note that the integral in the expression above may be infinite, so the infimum is minimized for some $\Delta$ that counteracts the expression $\frac{1}{r(1-r)}$. Using Lagrange multipliers, we obtain that the infimum is minimized for 
\begin{equation}\label{eq:deltaperturbation}
    \D=\frac{C_r}{B_r}r^2(1-r)
\end{equation}
and
\begin{equation}
    K_r=\frac{C_r^2}{2B_r},
\end{equation}
with $B_r$ as defined in \eqref{eq:Br}. Chakrabarty et al. \cite{chakrabarty2020large} conclude the proof by showing that perturbations $\D_\e$ that are not of the form $\Delta_\e=\e\D$ are asymptotically worse as $\e\rightarrow0$. They do this via block graphons, and use Lemma \ref{lemma:eigenvaluerateblockapprox}.

\bibliographystyle{plain}
\bibliography{bibliografie}

\begin{thebibliography}{10}

\bibitem{borgs2020large}
C.~Borgs, J.~Chayes, J.~Gaudio, S.~Petti, and S.~Sen.
\newblock A large deviation principle for block models.
\newblock 2020.
\newblock arXiv:2007.14508.

\bibitem{chakrabarty2020large}
A.~Chakrabarty, R.S. Hazra, F.~den Hollander, and M.~Sfragara.
\newblock Large deviation principle for the maximal eigenvalue of inhomogeneous
  {Erd\H{o}s}-{R\'enyi} random graphs.
\newblock 2020.
\newblock arXiv:2008.08367.

\bibitem{chatterjee2016introduction}
S.~Chatterjee.
\newblock An introduction to large deviations for random graphs.
\newblock {\em Bull. Am. Math. Soc.}, 53(4):617--642, 2016.

\bibitem{Chatterjeebook}
S.~Chatterjee.
\newblock {\em Large Deviations for Random Graphs}.
\newblock Springer International Publishing, 1st edition, 2017.

\bibitem{chatterjee2013}
S.~Chatterjee and P.~Diaconis.
\newblock Estimating and understanding exponential random graph models.
\newblock {\em Ann. Statist.}, 41(5):2428--2461, 2013.

\bibitem{chatterjee2010large}
S.~Chatterjee and S.R.S. Varadhan.
\newblock The large deviation principle for the {Erdős-Rényi} random graph.
\newblock {\em Eur. J. Comb.}, 32(7):1000--1017, 2011.
\newblock Homomorphisms and Limits.

\bibitem{dhara2019large}
S.~{Dhara} and S.~{Sen}.
\newblock {Large deviation for uniform graphs with given degrees}.
\newblock 2020.
\newblock arXiv:1904.07666.

\bibitem{grebik2021large}
J.~{Greb\'ik} and O.~{Pikhurko}.
\newblock Large deviation principles for block and step graphon random graph
  models.
\newblock 2021.
\newblock arXiv:2101.07025.

\bibitem{Lovaszbook}
L.~Lov\'asz.
\newblock {\em Large {Networks} and {Graph Limits}}.
\newblock American Mathematical Society, 1st edition, 2012.

\bibitem{Markeringthesis}
M.J.R. Markering.
\newblock The large deviation principle for inhomogeneous {Erd\H{o}s-R\'enyi}
  random graphs.
\newblock Bachelor thesis, Leiden University, May 2020.

\bibitem{stein2005real}
E.M. Stein and R.~Shakarchi.
\newblock {\em Real Analysis: Measure Theory, Integration \& Hilbert Spaces}.
\newblock Princeton Lectures in Analysis, III. Princeton University Press,
  2005.

\end{thebibliography}

\end{document}